\newtheorem*{definition}{Definition}
\newtheorem{theorem}{Theorem}[section]
\newaliascnt{lemma}{theorem}
\newtheorem{lemma}[lemma]{Lemma}
\newaliascnt{proposition}{theorem}
\newtheorem{proposition}[proposition]{Proposition}
\newaliascnt{corollary}{theorem}
\newaliascnt{conjecture}{theorem}
\newaliascnt{example}{theorem}
\def\tagform@#1{\maketag@@@{\ignorespaces#1\unskip\@@italiccorr}}
\let\orgtheequation\theequation
\def\theequation{(\orgtheequation)}
\def\equationautorefname~{}
\newcommand{\R}{{\mathbb R}}
\newcommand{\Rd}{{\R^d}}
\newcommand{\Rdz}{{\R^{d}\setminus\{0\}}}
\newcommand{\tr}{\operatorname{tr}}
\newcommand{\E}{{\mathbb E}}
\renewcommand{\H}{{\mathbb H}}
\renewcommand{\P}{{\mathbb P}}
\newcommand{\ualpha}{{\underline{\alpha}}}
\newcommand{\oalpha}{{\overline{\alpha}}}
\newcommand{\diam}{\operatorname{diam}}
\newcommand{\Cf}{C_{1}}
\newcommand{\lC}{{\underline{C}}}
\newcommand{\uC}{{\overline{C}}}
\newcommand{\la}{{\underline{\alpha}}}
\newcommand{\ua}{{\overline{\alpha}}}
\newcommand{\lt}{{\underline{\theta}}}
\newcommand{\ut}{{\overline{\theta}}}
\newcommand{\WUSC}[3]{\textrm{\rm WUSC}(#1,#2,#3)}
\newcommand{\WLSC}[3]{\textrm{\rm WLSC}(#1,#2,#3)}
\newcommand{\A}{({\bf H})}
\newcommand{\As}{({\bf H^*})}
\definecolor{bs}{RGB}{255,0,0}
\definecolor{kb}{RGB}{0,255,0}
\begin{document}

\title[Trace estimates for L\'evy processes]{Trace estimates for unimodal L\'evy processes}
\author[]{K. Bogdan and B. A. Siudeja}
\address{Wroc{\l}aw University of Technology, Wroc{\l}aw, Poland}
\email{Krzysztof.Bogdan\@@pwr.edu.pl}
\address{Department of Mathematics, Univ.\ of Oregon, Eugene,
OR 97403, U.S.A.}
\email{Siudeja\@@uoregon.edu}
\date{\today}
\thanks{The authors were partially supported by NCN grant 2012/07/B/ST1/03356.}
\keywords{Unimodal L\'evy process, weak scaling, trace asymptotics, smooth domain}
\subjclass[2010]{\text{Primary 60J75. Secondary 60J35}}

\begin{abstract}
We give two-term small-time approximation  for the trace of the Dirichlet heat kernel of bounded smooth domain for unimodal L\'evy processes satisfying the weak scaling conditions. 
\end{abstract}

\maketitle

\section{Introduction}
A two-term small-time uniform approximation for the trace of the transition density of the Wiener process killed off bounded $R$-smooth  domain  $D\subset \Rd$, i.e. the classical Dirichlet heat kernel, was obtained by van den Berg \cite{MR880981}.  
The first  term of the approximation 
is proportional to the domain's volume $|D|$ and the second--to the
surface measure $|\partial D|$ of the boundary, with explicit coefficient depending on time. 
Asymptotic non-uniform expansions of the trace of the heat kernel
were given earlier in \cite{MR0061750}, see the discussion in \cite{MR880981}.

Ba\~nuelos and Kulczycki \cite{MR2438694} obtained a uniform two-term approximation 
for
the isotropic $\alpha$-stable L\'evy processes.
The closely related case of the relativistic $\alpha$-stable L\'evy processes 
was resolved by Ba\~nuelos, Mijena and Nane \cite{MR3111870}. A similar two-term approximation for Lipschitz domains was given for the Wiener process by Brown \cite{MR1134755}, and for the isotropic $\alpha$-stable L\'evy processes--by Ba\~nuelos, Kulczycki and Siudeja \cite{MR2568694}. 
Park and Song \cite{MR3269724} obtained a two-term small-time approximation of the trace for the relativistic $\alpha$-stable L\'evy processes on Lipschitz domains, and gave an explicit power expansion of the first term.

In this work we investigate those L\'evy processes $X_t$ in $\Rd$, where $d\ge 2$, which are unimodal and satisfy the so-called weak lower and upper scaling conditions, denoted
WLSC
and WUSC respectively, 
of orders strictly between $0$ and $2$ (see \autoref{sec:prel} for details). The isotropic stable and relativistic L\'evy processes are included as special cases but at present the orders of the lower and upper scalings may differ. 
For bounded $R$-smooth open sets $D\subset \Rd$ (also called $C^{1,1}$ open sets in the literature)   our main result gives a two-term small-time approximation  of the trace of the corresponding Dirichlet heat kernel. For instance we resolve sums of independent isotropic stable L\'evy processes with different indexes.

In what follows we let 
$\psi$ be the L\'evy-Khintchine exponent and $p_t(x)$ be the transition density of $X_t$.
We consider $$\tau_D=\{t>0: X_t\not \in D\},$$
the first time that $X_t$ exits $D$.
For $t>0$ and $x,y\in \Rd$, we define the heat remainder
\begin{align}\label{eq:rdE}
  r_D(t,x,y)=\E^x\left[ \tau_D<t,p_{t-\tau_D}(X(\tau_D)-y) \right].
\end{align}
The Dirichlet heat kernel for $X_t$ is given by the Hunt formula:
\begin{align}\label{eq:wH}
 p_D(t,x,y)=p_t(y-x)-r_D(t,x,y),
\end{align}
and the trace of $X_t$ on $D$ is
\begin{equation}\label{eq:deftr}
\tr(t,D)=\int p_D(t,x,x)dx,\qquad t>0.
\end{equation}
We denote $\H=\{(x_1,\ldots,x_d)\in \Rd: x_1>0\}$, a half-space, and for $t>0$ we let
  \begin{align*}
  C_\H(t)&=\int_0^{\infty} r_\H(t,(q,0,\cdots,0),(q,0,\cdots,0))dq.
  \end{align*}
For instance, $C_\H(t)=ct^{-d/\alpha+1/\alpha}$ for the isotropic $\alpha$-stable L\'evy process \cite{MR2438694}.
Here is our main result (a stronger statement is given as \autoref{th:mainR} in \autoref{s:mt}). 
\begin{theorem}\label{mainthm}
If  bounded open set $D\subset \Rd$ is $R$-smooth, 
{\rm WLSC} and {\rm WUSC} hold for $\psi$, and $t\to 0$, then
$\tr(t,D)$ equals $p_t(0)|D|-C_\H(t)|\partial D|$ plus lower order terms.
\end{theorem}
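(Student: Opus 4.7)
The plan is to apply the Hunt formula \eqref{eq:wH} at $y=x$, which gives
$$\tr(t,D) = p_t(0)|D| - \int_D r_D(t,x,x)\,dx,$$
so the claim reduces to showing that
$$\int_D r_D(t,x,x)\,dx = C_\H(t)|\partial D| + o\bigl(C_\H(t)|\partial D|\bigr)\quad\text{as }t\to 0.$$

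First I would introduce a cutoff scale $\delta=\delta_t$ lying between the process's typical displacement $1/\psi^{-1}(1/t)$ (determined by WLSC and WUSC) and the boundary radius $R$, and split the integral into the interior region $\{x\in D:\operatorname{dist}(x,\partial D)>\delta\}$ and the boundary collar $\{x\in D:\operatorname{dist}(x,\partial D)\le\delta\}$. On the interior region the contribution should be of genuinely lower order: starting far from $\partial D$, a WUSC-driven exit tail estimate of the form $\P^x(\tau_D<t)\le Ct\,\psi(\operatorname{dist}(x,\partial D)^{-1})$ combined with a uniform-in-$s$ upper bound on $p_s$ (both available under the scaling hypotheses) should yield a bound negligible compared to the boundary contribution.

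On the collar, the $R$-smoothness of $\partial D$ lets me parameterize $x = z + q\nu(z)$ with $z\in\partial D$, $\nu(z)$ the inward unit normal, and $q\in[0,\delta]$, with Jacobian $1+O(q/R)$. The heart of the argument is then a pointwise comparison
$$r_D\bigl(t, z+q\nu(z), z+q\nu(z)\bigr) = r_\H\bigl(t,(q,0,\ldots,0),(q,0,\ldots,0)\bigr) + \text{(error)},$$
where $\H$ is the tangent half-space at $z$. Since $\partial D$ deviates from this tangent hyperplane by at most $O(q^2/R)$, and the process is unimodal, sandwiching $D$ between inner and outer tangent half-spaces and comparing the corresponding harmonic measures of the complement should bound the error by a term lower order than $C_\H(t)$, uniformly in $z$ and $q$. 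Integrating $z$ over $\partial D$ and extending $q$ from $[0,\delta]$ to $[0,\infty)$ (the tail in $q$ again being negligible, by the unimodal upper bound on $p_t$ applied to the definition of $r_\H$) then produces exactly $|\partial D|\,C_\H(t)$.

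The main obstacle is the half-space comparison, or rather, converting the pointwise bound into a quantitatively lower-order remainder after integration over the collar. This demands control of $r_D-r_\H$ uniformly across the wide range of scales between $1/\psi^{-1}(1/t)$ and $R$, and here both scaling hypotheses are essential: WLSC rules out pathological small-scale concentration, so that curvature corrections really are lower order, while WUSC suppresses long jumps that would otherwise make the boundary geometry felt from far away. I expect the stronger \autoref{th:mainR} to provide precisely this quantitative comparison, with an explicit remainder in terms of $\psi$, $R$, and $|\partial D|$.
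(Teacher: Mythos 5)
Your reduction via the Hunt formula, the interior/collar split, the coarea parameterization of the collar, and the comparison with the tangent half-space $\H(x)$ at the nearest boundary point all match the paper's proof; the paper uses a fixed cutoff at distance $R/2$ from $\partial D$ rather than a $t$-dependent $\delta_t$, which is immaterial.

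The gap is in the comparison step, which is where all the work lies. You cannot sandwich the bounded set $D$ between two half-spaces, and comparing ``harmonic measures of the complement'' is not quite the right object either: $r_D$ is a time-weighted functional of the joint law of $(\tau_D,X_{\tau_D})$, not a harmonic measure. What the paper does instead is sandwich $D$ between the inner tangent ball $I\subset D$ and the complement $O^c\supset D$ of the outer tangent ball at $x^*$, take the half-space $\H(x)$ with $I\subset\H(x)\subset O^c$, and invoke domain monotonicity of the heat remainder together with \autoref{rDrelative} to obtain
\begin{align*}
\big|r_D(t,x,x)-r_{\H(x)}(t,x,x)\big| \le r_I(t,x,x)-r_{O^c}(t,x,x)=\E^x\!\left[\tau_I<t,\,X(\tau_I)\in O^c;\ p_{O^c}(t-\tau_I,X(\tau_I),x)\right],
\end{align*}
i.e.\ the weighted probability of jumping from $I$ directly across the thin lens $O^c\setminus I$. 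Bounding this is exactly \autoref{propjump}, which is the technical core of the paper; its proof runs through the Ikeda--Watanabe formula (\autoref{propIW}), the truncated Green function and Poisson kernel estimates for $I$ and $O^c$, and the spherical integration lemma (\autoref{lem:spherical}), split over three regimes (long jump, long exit time with short jump, short exit time or medium jump). None of that is present in your sketch, and deferring it to \autoref{th:mainR} is circular, since \autoref{th:mainR} is proved through this very estimate. A smaller imprecision: the pointwise error should be measured against $r_\H(t,q,q)\approx T(t)^{-d}$, not against $C_\H(t)$, which carries an extra length dimension; the relevant smallness factor is $T(t)/R$, which indeed tends to $0$ as $t\to0$.
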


Heuristically, if $x\in D$ and $t>0$ is small, then $r_D(t,x,x)$ is small and so $p_D(t,x,x)$ is close to $p_{\Rd}(t,x,x)=p_t(0)$. Therefore the first approximation to $\tr(t,D)$ is $p_t(0)|D|$. 
The second term in \autoref{mainthm}, $C_\H(t)|\partial D|$, approximates  $\int_D r_D(t,x,x)dx$.
As we shall see, $r_D(t,x,x)$ depends primarily on the distance of $x$ from $\partial D$. It is here that the $R$-smoothness of $D$ plays a role by allowing for an asymptotic coefficient independent of $D$, that is $C_\H(t)$. 
In view of the definition of $C_\H(t)$, the appearance of $|\partial D|$ in the second term of the approximation of the trace is natural.

In some cases, including the relativistic stable L\'evy process, explicit expansions of $p_t(0)$ can be given \cite[Lemma 3.2]{MR3269724}. In more general situations $p_t(0)$, $C_\H(t)$ and the bounds for the error terms cannot be entirely explicit but 
\autoref{lem:lbCH} and \autoref{th:mainR} below provide a satisfactory formulation.

Technically we only need to estimate $\int_D r_D(t,x,x)dx$ to prove Theorem~\eqref{mainthm}. In this connection we note that sharp global estimates for $p_D(t,x,y)$  were recently obtained by Bogdan, Grzywny and Ryznar \cite{MR3249349}, but
these estimates  do not easily translate into sharp estimates of $r_D(t,x,y)$. Namely, if $p_D(t,x,y)$ is only known to be proportional to $p_t(y-x)$, then 
essential further work is needed to accurately estimate $r_D(t,x,y)$.

The paper is composed as follows. In \autoref{sec:prel} we give preliminaries on unimodal L\'evy processes with scaling, their heat kernel, Green function and Poisson kernel for $R$-smooth open sets. 
In \autoref{s:mt} we prove \autoref{th:mainR}, a stronger and more detailed variant of \autoref{mainthm}.
The most technical step of the proof of \autoref{th:mainR}
is given separately in \autoref{proofprop}.

We remark in passing that the trace can also be studied and interpreted within the spectral theory of the corresponding semigroup
given by the integral kernel $p_D$ \cite{MR2438694}.
In view toward further research we note that 
sharp 
pointwise estimates of $r_D(t,x,y)$ complementing \cite{MR3249349}
would be of considerable interest.
We also note that two-term approximations of the trace of the heat kernel of general unimodal L\'evy processes  are open for Lipschitz domains.

{\bf Acknowledgments.} We thank Tomasz Grzywny for very helpful discussions and suggestions on the manuscript.
\section{Preliminaries}\label{sec:prel}

\subsection{Unimodality}\label{sec:uni}
A Borel measure 
on $\Rd$ is called isotropic unimodal, in short: unimodal, if
on $\Rd\setminus \{0\}$ it is absolutely continuous with respect to the Lebesgue measure
and has a  radially nonincreasing, in particular rotationally invariant, or isotropic density function. 
Recall that L\'evy measure is
an arbitrary Borel measure concentrated on $\Rdz$ and such that
\begin{equation*}\label{wml}
\int_\Rd \left(|x|^2\wedge 1\right)\nu(dx)<\infty.
\end{equation*}
In what follows we assume that $\nu$ is a
unimodal L\'evy measure
and define
\begin{equation}\label{eq:LKe}
 \psi(\xi)=\int_\Rd \left(1- \cos \left<\xi,x\right>\right) \nu(dx),\qquad\xi\in\Rd,
\end{equation}
the L\'evy-Khintchine exponent. 
It is a radial function, and we often let
$\psi(r)=\psi(\xi)$, where  $\xi\in \Rd$ and  $r=|\xi|\ge 0$. 
The same convention applies to all radial functions.
The (radially nonincreasing) density function of the unimodal L\'evy measure $\nu$ will also be denoted by $\nu$, so $\nu(dx)=\nu(x)dx$ and $\nu(x)=\nu(|x|)$.
We point out that for $\lambda\ge 1$ and $r\ge 0$, $\psi(\lambda r)\ge \pi^{-2} \psi(r)$ 
and $\psi(\lambda r)\le \pi^{-2} \lambda^2\psi(r)$
\cite[Section 4]{MR3165234}. More restrictive inequalities of this type define what are called the weak scaling conditions,
see \autoref{ss:s}.

We consider the pure-jump
L\'evy process $X=(X_t, \,t\ge 0)$ on $\Rd$ \cite{MR1739520}, in short: $X_t$,
determined by the L\'evy-Khintchine formula
$$
\E\,e^{i\left<\xi, X_t\right>}=e^{-t\psi(\xi)}=\int_\Rd e^{i\left<\xi,x\right>}p_t(dx).
$$
The process is (isotropic) unimodal, meaning that all its one-dimensional distributions
$p_t(dx)$ are (isotropic) unimodal; in fact the unimodality of $\nu$ is also necessary for the unimodality of $X_t$ \cite{MR705619}.
In what follows we always assume that $\psi$ is unbounded, equivalently that $\nu(\Rd)=\infty$.
In other words $X_t$ below is not a compound Poisson process. 
Clearly, $\psi(0)=0$ and $\psi(u)>0$ for $u>0$.
By \cite[Lemma~1.1]{MR3249349}, $p_t(dx)$ have bounded, in fact smooth density functions $p_t(x)$ for all $t>0$ if and only if 
the following Hartman-Wintner condition holds,
\begin{align}\label{eqHW}
&\lim_{|\xi|\to \infty}\psi(\xi)/\ln |\xi|=\infty.
\end{align}

Let $V$ be the renewal function of the corresponding
ladder-height process  of the first coordinate of $X_t$. Namely we consider $X_t^{(1)}$, the first coordinate process of $X_t$,
its running maximum $M_t:=\sup_{0\le s\leq t} X_s^{(1)}$ and the local time $L_t$ of $M_t-X_t^{(1)}$ at $0$ so normalized that its inverse function $L^{-1}_t$ is a standard $1/2$-stable subordinator. 
The resulting ladder-height process $\eta(t):=X^{(1)}(L^{-1}_t)$ is a subordinator with the Laplace exponent
\begin{equation*}\label{kappa}
 \kappa(u)=-\log \E e^{-u \eta(1)}=
\exp\left\{\frac{1}{\pi} \int_0^\infty \frac{ \log {\psi}(u\zeta)}{1 + \zeta^2} \, d\zeta\right\}, \quad u\ge 0,
\end{equation*}
and $V(x)$ is defined as the accumulated potential of $\eta$:
$$
V(x)=\E \int_0^\infty {\bf 1}_{[0,x]}(\eta_t)dt, \qquad x\ge 0. 
$$
For $x<0$ we let $V(x)=0$.
For instance, if $\psi(\xi)= |\xi|^\alpha$ with $\alpha\in (0,2)$, then
$V(x)= x_+^{\alpha/2}$
\cite[Example~3.7]{MR2453779}.
Silverstein studied $V$ and $V'$ as $g$ and $\psi$ in \cite[(1.8) and Theorem~2]{MR573292}.
The  Laplace transform of $V$ is
\begin{equation*}\label{eq:tLV}
\int_0^\infty V(x)e^{-u x}dx=\frac{1}{u\kappa(u)}, \qquad u>0.
\end{equation*}
The function $V$ is continuous and strictly increasing from $[0,\infty)$ onto $[0,\infty)$.
We have 
$
\lim_{r\to \infty}V(r)=\infty$. Also, $V$ is
subadditive:
\begin{equation}\label{subad}
 V(x+y)\le V(x)+V(y), \quad x,y \in \R.
\end{equation}
For a more detailed discussion of $V$ we refer the reader to
\cite{BGRptrf} and \cite{MR573292}.

In estimates we can use $V$ 
and $\psi$ 
interchangeably because by \cite[Lemma~1.2]{MR3249349},
\begin{equation}\label{cVh1pg}
V(r)
\approx \left[\psi(1/r)\right]^{-1/2}
,\qquad r>0.
\end{equation}
The above means that there is a {\it constant}, i.e. a number $C\in (0,\infty)$, such that for all $r>0$ we have
$C^{-1}V(r)\le \left[\psi(1/r)\right]^{-1/2}\le C V(r)$.
In fact in \eqref{cVh1pg} we have $C=C(d)$, meaning that $C$ may be so chosen to depend only on the dimension, see ibid.
Similar notational conventions are used throughout the paper.
To give full justice to $V$, the function 
is absolutely crucial
in the proofs of 
\cite{BGRptrf}, a paper leading to \cite{MR3249349}.
By \eqref{subad},
\begin{align}\label{eq:sublinear}
  \frac12\varepsilon V(r)\le V(\varepsilon r)\le V(r),\qquad 0<\varepsilon\le 1, \quad 0<r<\infty.
\end{align}

\subsection{Scaling}\label{ss:s}
We shall assume  relative power-type behaviors of $\psi(r)$
at infinity.
Namely we say that
$\psi$ satisfies the weak lower scaling condition
at infinity (WLSC) if there are numbers
$\la>0$, $\lt\in [0,\infty)$
and  $\lC\in(0,1]$,  such that
\begin{equation*}\label{eq:LSC2}
 \psi(\lambda r )\ge
\lC\lambda^{\,\la} \psi( r )\quad \mbox{for}\quad \lambda\ge 1, \quad
 r >\lt.
\end{equation*}
Put differently and more explicitly, $
\psi( r )/ r^{\, \la}$ is almost increasing on $(\lt,\infty)$, i.e.
\begin{equation*}\label{eq:LSC2a}
 \frac{\psi(s)}{s^{\,\la}}\ge
\lC\frac{\psi( r )}{ r ^{\, \la}},\quad \mbox{ if } \quad\; s\ge  r >\lt.
\end{equation*}
In short we write $\psi\in\WLSC{\la}{ \lt}{\lC}$, $\psi\in {\rm WLSC}(\la,\lt)$, $\psi\in {\rm WLSC}(\la)$ or  $\psi\in {\rm WLSC}$, depending on how specific we wish to be about the constants.
If $\psi\in$WLSC$(\la,\lt)$, then we say
that $\psi$ satisfies the {\it global}
weak lower scaling condition (global WLSC) if $\lt=0$.
If $\lt\ge 0$, then we can emphasize this by calling the scaling {\it local} at infinity.
We always assume that $\psi\not \equiv 0$, therefore in view of $\psi\in$WLSC
we have
the Hartman-Wintner condition \eqref{eqHW} satisfied, and so $\Rd\ni x\mapsto p_t(x)$ is smooth for each $t>0$.

Similarly,
the weak upper scaling condition
at infinity (WUSC) means that
there are numbers $\ua <2$, $\ut\ge 0$
and $\uC{\in [1,\infty)}$ such that
\begin{equation*}\label{eq:USC2}
 \psi(\lambda r )\le
\uC\lambda^{\,\ua} \psi( r )\quad \mbox{for}\quad \lambda\ge 1, \quad r >\ut.
\end{equation*}
In short, $\psi\in\WUSC{\ua}{ \ut}{\uC}$ or $\psi\in{\rm WUSC}$.
{\it Global} WUSC is WUSC$(\ua,0)$, etc. 

We call $\la$, $\lt$, $\lC$, $\ua$, $\ut$, $\uC$ the scaling characteristics of $\psi$. 
As pointed out in \cite[Remark 1.4]{MR3249349}, by inflating $\lC$ and $\uC$ we can replace $\lt$ with $\lt/2$ and $\ut$ by $\ut/2$ in the scalings, therefore we can always choose the same, arbitrarily small value $\theta=\lt=\ut>0$ in both local scalings WLSC and WUSC, if they hold at all.  The scalings characterize the so-called common bounds for $p_t(x)$ \cite[Theorem~21 and Theorem~26]{MR3165234}, and so they are 
natural conditions on $\psi$ in the unimodal setting.
The reader may also find  in \cite{MR3165234} many examples of L\'evy-Khintchine exponents which satisfy WLSC or WUSC. For instance $\psi(\xi)=|\xi|^{\alpha}$, the L\'evy-Khintchine exponent of the isotropic $\alpha$-stable L\'evy process in $\Rd$ with $\alpha\in (0,2)$, satisfies
$\WLSC{\alpha}{0}{1}$ and $\WUSC{\alpha}{ 0}{1}$.
The characteristic exponent $\psi(\xi)=(1+|\xi|^2)^{\alpha/2}-1$ of the relativistic $\alpha$-stable L\'evy process with $\alpha\in (0,2)$ satisfies  
WLSC$({\alpha},0)$
and WUSC$({\alpha},{ 1})$.
Other examples include $\psi(\xi)=|\xi|^{\alpha_1}+|\xi|^{\alpha_2}\in \WLSC{\alpha_1}{0}{1}\cap \WUSC{\alpha_2}{0}{1}$, where $0<\alpha_1<\alpha_2<2$, etc.
If $\psi(r)$ is $\alpha$-regularly varying at infinity and $0<\alpha<2$, then $\psi\in$WLSC($\la$)$\cap$WUSC($\ua$), with any $0<\la<\alpha<\ua<2$.
The connection of the scalings to the so-called Matuszewska indices of $\psi(r)$ is explained in \cite[Remark~2 and Section 4]{MR3165234}. 

If $\psi\in$WLSC$(\ualpha,\theta)$, then by \eqref{cVh1pg} (or see \cite[(1.8)]{MR3249349}) we get the following scaling at $0$:
\begin{align}\label{eq:scaling}
V(\varepsilon r)\le C \varepsilon^{\ualpha/2} V(r), \qquad 0<\varepsilon\le 1, \quad 0<r<1/\theta.
\end{align}
Here the range is $0<r<\infty$ if the lower scaling of $\psi$ is global, in agreement with \eqref{eq:scaling} and the convention $1/0=\infty$.
If $\psi\in$WUSC$(\ua,\theta)$, then, similarly,
\begin{align}\label{eq:lowscaling}
V(\varepsilon r)\ge C \varepsilon^{\oalpha/2} V(r), \qquad 0<\varepsilon\le 1, \quad 0<r<1/\theta.
\end{align}
We shall need $V^{-1}$, the inverse function of $V$ on $[0,\infty)$.
We 
let 
\begin{equation}\label{eq:defT}
  T(t)=V^{-1}(\sqrt{t}), \qquad t\geq 0.
\end{equation}
Put differently, $[V(T(t))]^2=t$.
For instance, 
$T(t)=t^{1/\alpha}$ for the isotropic $\alpha$-stable L\'evy process.
The functions $V$ and $T$ allow us to handle intrinsic difficulties which hampered  extensions of \cite{MR880981, MR2438694, MR3111870,MR3269724}
to general unimodal L\'evy processes, namely the lack of explicit formulas and estimates for the involved potential-theoretic objects.

We note that $T(t)<a$ if and only if $t<V^2(a)$, wherever $a,t\ge 0$.
The scaling properties of $T$ at zero reflect those of $\psi$ (at infinity) as follows.
\begin{lemma}\label{lem:inversescale}
If \eqref{eq:scaling} holds, $0< \varepsilon\le1$ and $0\le t< V(1/\theta)^2$, then
$T(\varepsilon t)\ge c \varepsilon^{1/\la} T(t)$.
\noindent
If 
\eqref{eq:lowscaling} holds, $0< \varepsilon\le1$ and $0\le t< V(1/\theta)^2$, then
$T(\varepsilon t)\le c \varepsilon^{1/\ua} T(t)$.
\end{lemma}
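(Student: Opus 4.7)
The plan is to read the claim as a scaling statement about $V^{-1}$ in disguise, and deduce it by applying the hypothesis on $V$ at a cleverly chosen pair of points. The defining identity $V(T(t))=\sqrt{t}$ is the only non-trivial input besides monotonicity of $V$.

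First I would set $r:=T(t)$ and $\lambda := T(\varepsilon t)/T(t)$. Since $V$ is continuous and strictly increasing on $[0,\infty)$ and $\varepsilon\le 1$, we have $T(\varepsilon t)\le T(t)$, so $\lambda\in (0,1]$. Moreover the assumption $t<V(1/\theta)^2$ is equivalent, via $V(T(t))=\sqrt{t}$ and monotonicity, to $r=T(t)<1/\theta$, which is exactly the range in which the scaling \eqref{eq:scaling} (respectively \eqref{eq:lowscaling}) is available.

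For the first inequality I would then apply \eqref{eq:scaling} with this $r$ and $\lambda$:
\begin{equation*}
\sqrt{\varepsilon t}=V(T(\varepsilon t))=V(\lambda r)\le C\,\lambda^{\la/2}V(r)=C\,\lambda^{\la/2}\sqrt{t}.
\end{equation*}
Dividing by $\sqrt{t}$ and solving for $\lambda$ gives $\lambda\ge C^{-2/\la}\varepsilon^{1/\la}$, that is $T(\varepsilon t)\ge c\,\varepsilon^{1/\la}T(t)$ with $c=C^{-2/\la}$. The second inequality is entirely symmetric: with the same choice of $r$ and $\lambda$ and the hypothesis \eqref{eq:lowscaling},
\begin{equation*}
\sqrt{\varepsilon t}=V(\lambda r)\ge C\,\lambda^{\ua/2}V(r)=C\,\lambda^{\ua/2}\sqrt{t},
\end{equation*}
which rearranges to $\lambda\le C^{-2/\ua}\varepsilon^{1/\ua}$, i.e.\ $T(\varepsilon t)\le c\,\varepsilon^{1/\ua}T(t)$.

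I do not anticipate any real obstacle: the argument is a one-line transfer of the scaling of $V$ through its inverse, and the only subtle point is bookkeeping on the domain of validity, which is already built into the hypothesis $t<V(1/\theta)^2$. If the local form of the hypothesis were to create an issue, it would be that $\lambda r$ could fall outside $(0,1/\theta)$; but $\lambda\le 1$ together with $r<1/\theta$ rules this out, so the two evaluations of $V$ at $r$ and at $\lambda r$ are both within range.
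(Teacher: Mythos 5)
Your argument is correct and is essentially the same as the paper's: you set $\lambda=T(\varepsilon t)/T(t)\le 1$, observe $T(t)<1/\theta$ from $t<V(1/\theta)^2$, and apply the scaling inequality for $V$ at $r=T(t)$ to get $\sqrt{\varepsilon}=V(\lambda r)/V(r)\le C\lambda^{\la/2}$, exactly as in the text. The remark about $\lambda r$ staying in range is harmless but unnecessary, since \eqref{eq:scaling} only requires $\lambda\le1$ and $r<1/\theta$.
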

\begin{proof}
To prove the first assertion we note that
$T$ is increasing. If $0< t< V(1/\theta)^2$, and $0\le \varepsilon\le 1$, then 
$T(t)<1/\theta$ and $T(\varepsilon t)/T(t)\le 1$.
By \eqref{eq:scaling},
\begin{align*}
    \sqrt{\varepsilon}=\frac{V(T(\varepsilon t))}{V(T(t))}\le C \left( \frac{T(\varepsilon t)}{T(t)} \right)^{\la/2},
\end{align*}
as needed.
The proof of the second inequality is analogous but uses \eqref{eq:lowscaling}.
\end{proof}

By \eqref{eq:sublinear} and the proof of \autoref{lem:inversescale} we always have
\begin{align}\label{eq:Tscaling}
  T(\varepsilon t)\le c\sqrt{\varepsilon}T(t),\qquad 0<\varepsilon\le 1, \quad 0<r<\infty.
\end{align}

In what follows we always assume that $\nu$ is an infinite unimodal L\'evy measure on $\Rd$ with $d\ge 2$ and the L\'evy-Khintchine exponent defined by \autoref{eq:LKe} satisfies
$$
\psi \in {\rm WLSC}(\la,\theta)\cap {\rm WUSC}(\ua,\theta),
$$
where $0<\la\le \ua<2$, and $\theta\ge 0$. 
Many partial results below need less assumptions but for simplicity of presentation
we leave such observations to the interested reader.
\begin{definition}
We say that
$\A$ holds if for every $r>0$  there is $H_r\geq 1$ such that
\begin{equation*}\label{HR}
V(z)-V(y)\le H_r \,V^\prime(x)(z-y)\quad \text{whenever}\quad 0<x\le y\le z\le5x\leq5r.
\end{equation*}
We say that  $\As$ holds if
$H_\infty:=\sup_{r>0}H_r<\infty$.
\end{definition}
We may and do chose  $H_r$
nondecreasing in $r$.
By \cite[Section 7.1]{BGRptrf}, 
$\A$ always holds in our setting
because $\psi$ satisfies WLSC and WUSC. 
If  
$\psi\in$WLSC$({\la},{0})\cap$WUSC$({\ua},{0})$,
then $\As$ even holds.

\subsection{Heat kernel}
By \cite[Lemma~1.3]{MR3249349},
there is a $\Cf=\Cf(d)$ such that
\begin{equation}\label{B}p_t(x)\le {\Cf}\frac{t}{ |x|^dV^2(|x|)},\qquad t>0,\; x\in\Rdz,
\end{equation}
hence
\cite[(15)]{MR3165234},
\begin{align}\label{eq:levybound}
  \nu(x)\le \Cf  \frac{1}{V^2(|x|)|x|^d},\qquad x\neq 0.
\end{align}
Since $\psi\in$WLSC$(\la,\theta)$,
by \cite[Lemma 1.5]{MR3249349}
we have
  \begin{align}\label{eq:pttime}
    p_t(x)\le c T^{-d}(t), \qquad t<V^2(\theta^{-1}),\qquad x\in \Rd.
  \end{align}

We now discuss the heat remainder and the heat kernel of open sets $D\subset \Rd$.
As usual, 
$0\le r_D(t,x,y)\le p_t(x-y)$.
Indeed, one directly checks that  $[0,t)\ni s\mapsto Y_s=p(t-s,X_s,y)$ is a $\P_x$-martingale for each $x,y\in \Rd$. The martingale almost surely converges to $0$ as $s\to t$, and we let $Y_t=0$. By optional stopping, quasi-left continuity of $X$ and Fatou's lemma, for every stopping time $T\le t$ we have $\E_x Y_T\le \E_x Y_0=p(t,x,y)$. The inequality
$r_D(t,x,y)\le p_t(x-y)$
follows by taking $T=\tau_D\wedge t$. The next result is a consequence of the strong Markov property of $X_t$.
\begin{lemma}\label{rDrelative}
Consider open sets $D\subset F\subset \Rd$. For all $t>0$ and $x,y\in \Rd$,
  \begin{align*}
    p_F(t,x,y)-p_D(t,x,y)=\E^y \left[\tau_D<t,X(\tau_D)\in F\setminus D;\ p_F(t-\tau_D,X(\tau_D),x)\right].
  \end{align*}
\end{lemma}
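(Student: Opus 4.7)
The plan is to use the strong Markov property at the stopping time $\tau_D$ for the process started from $y$. First I recall the probabilistic meaning: $p_F(t,y,x)$ and $p_D(t,y,x)$ are the densities (in $x$) of the sub-probabilities $\P^y(X_t\in dx,\tau_F>t)$ and $\P^y(X_t\in dx,\tau_D>t)$, and these densities exist and are smooth thanks to \eqref{eqHW}, which is ensured by WLSC. Since the unimodal L\'evy measure $\nu$ is symmetric, the Dirichlet heat kernels are symmetric in their spatial arguments, so it is equivalent to establish the identity after swapping $x$ and $y$ on the left.

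Because $D\subset F$ we have $\tau_D\le\tau_F$, so the event $\{\tau_F>t\}$ splits into $\{\tau_D\ge t\}$ (on which $\tau_F\ge t$ automatically) and $\{\tau_D<t,\,\tau_F>t\}$. The first piece contributes $p_D(t,y,x)\,dx$ to the law of $X_t$. For the second piece I apply the strong Markov property at $\tau_D$: conditionally on $\mathcal{F}_{\tau_D}$, the continuation is an independent copy of $X$ starting at $X(\tau_D)$, and the requirement $\tau_F>t$ translates to the requirement that this continuation remain in $F$ for a further time $t-\tau_D$. If $X(\tau_D)\notin F$ then the continuation's exit time from $F$ is $0$, so the requirement fails and such terms drop out; one is left with $X(\tau_D)\in F\setminus D$, contributing the density $p_F(t-\tau_D,X(\tau_D),x)$.

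Summing the two contributions yields
\begin{equation*}
p_F(t,y,x)=p_D(t,y,x)+\E^y\left[\tau_D<t,\,X(\tau_D)\in F\setminus D;\,p_F(t-\tau_D,X(\tau_D),x)\right],
\end{equation*}
and the lemma follows by symmetry of $p_F$ and $p_D$ in their spatial arguments. The main technical care is justifying that null-set subtleties, in particular the event $\{X(\tau_D)\in\partial F\}$ and the possibility that $X_{\tau_D-}$ lies on $\partial D$, can be absorbed; this rests on the quasi-left-continuity of $X_t$ together with the absolute continuity of $p_s$ for $s>0$, which guarantees that these boundary configurations are irrelevant to the integrals above.
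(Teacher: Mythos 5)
Your proof is correct and uses essentially the same idea as the paper, which cites the proof of \cite[Proposition~2.3]{MR2438694}: decompose at the stopping time $\tau_D$ via the strong Markov property, use $\tau_D\le\tau_F$ (from $D\subset F$) to split $\{\tau_F>t\}$ into $\{\tau_D>t\}$ (giving $p_D$) and $\{\tau_D<t,\,X(\tau_D)\in F\setminus D\}$ (giving the expectation term), and then invoke the symmetry of the Dirichlet heat kernels. Your remarks on the negligibility of $\{\tau_D=t\}$ and $\{X(\tau_D)\in\partial F\}$ address the only technical points that need care, so the argument is complete.
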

\begin{proof}
We repeat verbatim the proof of \cite[Proposition 2.3]{MR2438694}.
\end{proof}
Here is a well-known Ikeda-Watanabe formula for the joint distribution of $X(\tau_D)$ and $\tau_D$, 
see \cite[Proposition~2.5]{MR2231884} 
or \cite[(27)]{2014arXiv1411.7952B} for proof.
\begin{lemma}\label{propIW}
 Let $D\subset \Rd$ be open. For $x\in D$, $t_2\ge t_1\ge 0$ and $A\subset (\overline{D})^c$,  
\begin{align*}
  \P^x(X(\tau_D)\in A, t_1<\tau_D<t_2)=\int_D \int_{t_1}^{t_2} p_D(s,x,y)ds \int_A \nu(y-z)dzdy.
\end{align*}
\end{lemma}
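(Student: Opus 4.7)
The plan is to derive the identity from the L\'evy-system (compensation) formula for the jumps of $X_t$. For a L\'evy process with L\'evy measure $\nu$, the compensator of the jump measure $\sum_{s>0}\delta_{(s,\Delta X_s)}$ is $\nu(dw)\,ds$, so for every nonnegative Borel $g$ and every stopping time $T$,
\begin{align*}
\E^x \sum_{0<s\le T} g(s,X_{s-},\Delta X_s)\mathbf{1}_{\Delta X_s\neq 0} \;=\; \E^x \int_0^T \int_{\Rdz} g(s,X_s,w)\,\nu(dw)\,ds.
\end{align*}
This is the only analytic input I need; it is classical for L\'evy processes and follows from the Poisson random measure representation of the jumps.

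The first step is to rewrite the event $\{X(\tau_D)\in A,\; t_1<\tau_D<t_2\}$ as a sum over jump times. Since $D$ is open and $A\subset (\overline D)^c$, the sets $D$ and $A$ are disjoint; moreover $X_t$ is a pure-jump process with $\nu(\Rd)=\infty$ and no Gaussian part, so it cannot reach $A$ continuously from inside $D$. Hence $X_{\tau_D^-}\in D$ almost surely on $\{X(\tau_D)\in A\}$, and
\begin{align*}
\mathbf{1}_{\{X(\tau_D)\in A,\; t_1<\tau_D<t_2\}} \;=\; \sum_{t_1<s<t_2,\; s\le\tau_D}\mathbf{1}_D(X_{s-})\,\mathbf{1}_A(X_s)\,\mathbf{1}_{\Delta X_s\neq 0},
\end{align*}
because $X_{s-}\in D$ together with $X_s\in A\subset D^c$ and $s\le\tau_D$ forces $s=\tau_D$.

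Next I would apply the compensation formula above with $T=\tau_D$ and with the integrand $g(s,y,w)=\mathbf{1}_{(t_1,t_2)}(s)\mathbf{1}_D(y)\mathbf{1}_A(y+w)$. Taking expectations, changing variables via $z=y+w$, and using the definition of $p_D(s,x,\cdot)$ as the sub-probability density of $X_s$ on $\{s<\tau_D\}$ gives
\begin{align*}
\P^x\bigl(X(\tau_D)\in A,\, t_1<\tau_D<t_2\bigr)
&= \int_{t_1}^{t_2} \E^x\!\left[\mathbf{1}_{\{s<\tau_D\}}\int_A\nu(z-X_s)\,dz\right]ds \\
&= \int_{t_1}^{t_2}\int_D p_D(s,x,y)\int_A\nu(z-y)\,dz\,dy\,ds,
\end{align*}
which is the stated identity after reordering the integrals by Fubini.

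The one genuine subtlety is the representation step: I must know that the first exit from $D$ really occurs by a jump into $(\overline D)^c$, i.e.\ that $X_{\tau_D^-}\in D$ and $X(\tau_D)\neq X(\tau_D^-)$ on $\{\tau_D<\infty\}\cap\{X(\tau_D)\in A\}$. For infinite-activity unimodal L\'evy processes in $\Rd$ with $d\ge 2$, the absence of creeping is standard, and the assumption $A\subset(\overline D)^c$ (rather than merely $A\subset D^c$) keeps $A$ uniformly away from $\partial D$, avoiding any extra care at the boundary. The rest of the argument is formal manipulation of indicators and Fubini's theorem, so I expect no further difficulty.
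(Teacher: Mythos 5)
Your approach via the L\'evy-system (compensation) formula is exactly the standard one, and it coincides with the paper's cited sources: the authors do not reprove the lemma but refer to Kulczycki--Siudeja and to Bogdan--Rosi\'nski--Wojciechowski, the latter of which derives the identity precisely from the L\'evy-system viewpoint. So you are on the intended route, and the manipulations (rewriting the event as a sum over jump times, applying the compensation formula with $g(s,y,w)=\mathbf{1}_{(t_1,t_2)}(s)\mathbf{1}_D(y)\mathbf{1}_A(y+w)$ at $T=\tau_D$, then Fubini and the change of variable $z=y+w$) all check out.

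One spot where your justification is looser than it needs to be is the claim that $X_{\tau_D^-}\in D$ a.s.\ on $\{X(\tau_D)\in A\}$, which you attribute to ``absence of creeping'' and to $A$ being ``uniformly away from $\partial D$.'' Neither phrase is quite right: creeping is a subordinator notion, and $A\subset(\overline D)^c$ does not give a positive distance from $\partial D$. What you actually must rule out is $X_{\tau_D^-}\in\partial D$ with $X(\tau_D)\in A$, and the cleanest way is to reuse the very tool you already invoked: apply the compensation formula with $\mathbf{1}_{\partial D}$ in place of $\mathbf{1}_D$; the right-hand side vanishes because $X_s\in D$ for $s<\tau_D$, hence $\P^x\bigl(X_{\tau_D^-}\in\partial D,\ X(\tau_D)\in A,\ t_1<\tau_D<t_2\bigr)=0$. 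With that one-line fix your representation of the indicator is an a.s.\ identity, and the rest of the argument is exactly as you wrote it.
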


We denote
$\delta_D(x):={\rm dist}(x,D^c)$, $x\in \Rd$.
\begin{lemma}\label{lem:rDbound}
We have
\begin{align}\label{eq:rDtime}
  r_D(t,x,y)&\le C T(t)^{-d},
\end{align}
and
\begin{align}\label{eq:zor}
r_D(t,x,y)&\le \Cf\frac{t}{V^2(\delta_D(x))\delta_D^{d}(x)}, \qquad x,y\in \Rd.
\end{align}
\end{lemma}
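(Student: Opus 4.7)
The plan is to prove each bound separately, leveraging the supermartingale inequality $r_D(t,x,y)\le p_t(y-x)$ noted in the discussion just above the lemma, the pointwise kernel bounds \eqref{B} and \eqref{eq:pttime}, and the symmetry of the Dirichlet heat kernel. No heavy machinery is needed.

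For \eqref{eq:rDtime}, the argument is almost immediate. I would chain $r_D(t,x,y)\le p_t(y-x)$ with the unimodality of $p_t$ to get $p_t(y-x)\le p_t(0)$, and then invoke \eqref{eq:pttime} to conclude $p_t(0)\le cT(t)^{-d}$ in the regime where \eqref{eq:pttime} applies.

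For \eqref{eq:zor}, the key move is to interchange $x$ and $y$. Since $\psi$ is radial we have $p_t(-z)=p_t(z)$, and symmetry of the free semigroup propagates to the Dirichlet setting, so $p_D(t,x,y)=p_D(t,y,x)$ and consequently
\begin{align*}
r_D(t,x,y)=r_D(t,y,x)=\E^y\bigl[\tau_D<t;\,p_{t-\tau_D}(X(\tau_D)-x)\bigr].
\end{align*}
If $x\notin D$, then $\delta_D(x)=0$ and \eqref{eq:zor} is vacuous, so assume $x\in D$. Then $X(\tau_D)\in D^c$ forces $|X(\tau_D)-x|\ge \delta_D(x)>0$, so by \eqref{B} and the monotonicity of $r\mapsto r^d V^2(r)$,
\begin{align*}
p_{t-\tau_D}(X(\tau_D)-x)\le \Cf\,\frac{t-\tau_D}{|X(\tau_D)-x|^d\,V^2(|X(\tau_D)-x|)}\le \Cf\,\frac{t}{\delta_D(x)^d\,V^2(\delta_D(x))}.
\end{align*}
Taking expectation and using $\P^y(\tau_D<t)\le 1$ yields \eqref{eq:zor}.

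The only subtlety worth flagging is the symmetrization in the second part. If one attempts to bound $r_D(t,x,y)$ directly as an $\E^x$-expectation, the geometric inequality $|X(\tau_D)-x|\ge \delta_D(x)$ is available only for the diagonal case $y=x$, and there is no comparable control over $|X(\tau_D)-y|$ for arbitrary $y$ (consider $y$ sitting just outside $D$ near a putative exit point). Passing to $r_D(t,y,x)$ via symmetry places $x$ in the role of target point, so the exit displacement is measured to $x$, which is precisely the variable appearing on the right-hand side of \eqref{eq:zor}.
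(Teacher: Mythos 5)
Your proof is correct and follows essentially the same route as the paper: the first bound comes from $r_D\le p_t$ together with \eqref{eq:pttime}, and the second from the symmetry $r_D(t,x,y)=r_D(t,y,x)$, the kernel bound \eqref{B}, and the geometric fact that $X(\tau_D)\in D^c$ forces $|X(\tau_D)-x|\ge \delta_D(x)$. You even state the last inequality with the correct sign $\ge$, whereas the paper's own proof contains the typo ``$|X(\tau_D)-x|\le \delta_D(x)$''.
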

\begin{proof}
Since $\psi \in \WLSC{\la}{\theta}{\lC}$, we have 
\eqref{eq:pttime}, which yields \eqref{eq:rDtime}.
By \eqref{eq:rdE}, \eqref{B}, and symmetry, 
  \begin{align*}
    r_D(t,x,y)=r_D(t,y,x)\le \E^y\left[\tau_D<t;\; \Cf\frac{t-\tau_D}{V^2(|X(\tau_D)-x|)|X(\tau_D)-x|^d}\right].
  \end{align*}
Since $|X(\tau_D)-x|\le \delta_D(x)$ and $V$ is increasing, we obtain \eqref{eq:zor}.
\end{proof}

Recall that $\H$ is a half-space and $C_\H(t)$ is defined immediately before \autoref{mainthm}.
\begin{lemma}\label{lem:halfspace} 
  If 
$T(t)<1/\theta$, then 
    $C_\H(t)\le cT(t)^{-d+1}$.

\end{lemma}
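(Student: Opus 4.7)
The plan is to split the defining integral
\[
C_\H(t)=\int_0^\infty r_\H\bigl(t,(q,0,\ldots,0),(q,0,\ldots,0)\bigr)\,dq
\]
at $q=T(t)$ and control the near-boundary part by the uniform time bound \eqref{eq:rDtime} and the far-boundary part by the spatial bound \eqref{eq:zor}. Note that for $x=(q,0,\ldots,0)\in\H$ we have $\delta_\H(x)=q$, so \autoref{lem:rDbound} applies directly at each such diagonal point.

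For the near part $0<q<T(t)$, I would use \eqref{eq:rDtime}, which is available precisely because the hypothesis $T(t)<1/\theta$ is equivalent to $t<V^2(1/\theta)$, the range in which the bound $p_t\le cT(t)^{-d}$ (and hence $r_\H\le cT(t)^{-d}$) is guaranteed. Integrating this constant bound over $(0,T(t))$ yields
\[
\int_0^{T(t)} r_\H\bigl(t,(q,0,\ldots,0),(q,0,\ldots,0)\bigr)\,dq\ \le\ c\,T(t)^{-d}\cdot T(t)\ =\ c\,T(t)^{-d+1},
\]
which is already the target order.

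For the far part $q\ge T(t)$, I would apply \eqref{eq:zor} with $\delta_\H(x)=q$, obtaining $r_\H\le \Cf\,t\,/\bigl(V^2(q)q^d\bigr)$. Since $V$ is increasing and $t=V^2(T(t))$ by the definition \eqref{eq:defT} of $T$, monotonicity gives $V^2(q)\ge V^2(T(t))=t$ on this range; hence the integrand is dominated by $\Cf\,q^{-d}$. Because $d\ge 2$, the tail integral $\int_{T(t)}^\infty q^{-d}\,dq$ converges and equals $T(t)^{-d+1}/(d-1)$, giving a contribution of order $T(t)^{-d+1}$. Adding the two pieces yields the claim.

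The argument is routine; the only thing to watch is that no scaling of $V$ is actually needed for the tail — plain monotonicity together with the identity $V^2(T(t))=t$ does the job — and that the hypothesis $T(t)<1/\theta$ is only used to invoke \eqref{eq:rDtime} on the near part. There is no essential obstacle, but one should be mindful that in dimension $d=2$ the tail $\int_{T(t)}^\infty q^{-d}\,dq$ is barely summable, so splitting exactly at $T(t)$ (rather than at a smaller scale) is what keeps the constant under control.
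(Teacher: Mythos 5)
Your proposal is correct and uses the same decomposition as the paper — splitting the defining integral at $q=T(t)$ and bounding the near-boundary piece by the uniform time bound \eqref{eq:rDtime} exactly as the paper does. Where you depart is in the tail $q\ge T(t)$: the paper applies \eqref{eq:zor} and then invokes the upper scaling \eqref{eq:scaling} to control $V^2(T)/V^2(q)\le c\,(T/q)^{\ualpha}$, integrating $q^{-d-\ualpha}$; you instead observe that on $q\ge T(t)$ monotonicity of $V$ together with the identity $V^2(T(t))=t$ gives $t/V^2(q)\le 1$ directly, so the integrand is dominated by the plain power $q^{-d}$, and you integrate that. Your route is slightly more elementary — it dispenses with scaling in the tail — at the cost of relying on $d\ge 2$ for convergence of $\int_{T}^\infty q^{-d}\,dq$; the paper's scaling argument would also survive $d=1$, but since $d\ge 2$ is a standing assumption throughout the paper this is no loss. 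One small quibble: your remark that the $d=2$ tail is ``barely summable'' overstates the concern — $\int_T^\infty q^{-2}\,dq=T^{-1}$ converges with room to spare, and the split at $T(t)$ is natural rather than delicate.
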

\begin{proof}
Denote $r(t,q)=r_\H(t,(q,0,\cdots,0),(q,0,\cdots,0))$.
By \autoref{eq:zor} and \eqref{eq:scaling},
  \begin{align*}
    \int_{T(t)}^\infty r(t,q)dq \le c \int_{T(t)}^\infty \frac{V^2(T(t))}{ V^2(q)q^d} dq
    \le c \int_{T(t)}^\infty \frac{T(t)^{\ualpha}}{q^{d+\ualpha}} dq=cT(t)^{1-d}.
  \end{align*}
Using \eqref{eq:rDtime} we get
  \begin{align*}
    \int_0^{T(t)} r(t,q)dq \le c \int_0^{T(t)} T(t)^{-d} dq = c T(t)^{1-d}.
  \end{align*}
\end{proof}
To obtain a lower bound for $C_\H(t)$ we shall use the existing heat kernel estimates for geometrically regular domains.
Recall that open set $D\subset \Rd$ satisfies the inner (outer) ball condition at scale $R>0$ if 
for every $Q\in \partial D$ there is a ball $B(x\rq{},R)\subset D$ (a ball $B(x\rq{}\rq{},R)\subset D^c$) such that $Q\in \partial B(x\rq{},R)$ ($Q\in \partial B(x\rq{}\rq{},R)$, respectively). An open set $D$ is $R$-smooth if it satisfies both the inner and the outer ball conditions at some scale $R>0$. 
We call $B(x\rq{},R)$ and $B(x\rq{}\rq{},R)$  the inner ball and the outer ball, respectively.

In the next lemma we collect a number of results from \cite{MR3249349}. 
For brevity in what follows we sometimes write $T=T(t)$, where $t>0$ is given.
\begin{lemma}\label{pDbound}
Let open $D\subset \Rd$ satisfy the outer ball condition at scale $R<1/\theta$.
 There is a constant $c$ such that for $T\vee |x-y|<1/\theta$,
      \begin{align*}
	p_{D}(t,x,y)&\le c\left(\frac{V(\delta_D(x))}{V(T\wedge R)}\wedge 1\right)\left(\frac{V(\delta_D(y))}{V(T\wedge R)}\wedge 1\right)\left( T^{-d}\wedge \frac{V^2(T)}{|x-y|^dV^2(|x-y|)}\right).
  \end{align*}
\end{lemma}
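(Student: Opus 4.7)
The plan is to derive the estimate by combining the sharp free heat kernel bound with a one-sided Dirichlet boundary decay estimate from \cite{MR3249349}, then symmetrize in $x,y$ via the semigroup property of the killed process.

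First, I would recall from \cite{MR3165234} (equivalently, from \cite{MR3249349}) the free heat kernel upper bound
\[
p_s(z)\le c\left(T(s)^{-d}\wedge \frac{V^2(T(s))}{|z|^dV^2(|z|)}\right),\qquad T(s)\vee|z|<1/\theta,
\]
which, after Chapman--Kolmogorov, will become the third factor of the claim. Second, I would invoke the one-sided boundary decay estimate of \cite{MR3249349}: under the outer ball condition at scale $R<1/\theta$,
\[
p_D(s,x,z)\le c\left(\frac{V(\delta_D(x))}{V(T(s)\wedge R)}\wedge 1\right)p_s(z-x),\qquad x\in D,\ z\in\Rd,
\]
which encodes the Dirichlet vanishing of order $V(\delta_D(x))$ near $\partial D$.

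Third, I would use the Chapman--Kolmogorov identity for the killed semigroup,
\[
p_D(t,x,y)=\int_D p_D(t/2,x,z)\,p_D(t/2,z,y)\,dz,
\]
bound each factor by the one-sided estimate (invoking the symmetry $p_D(t/2,z,y)=p_D(t/2,y,z)$ for the second factor), and pull the $z$-independent boundary decay factors out of the integral. The remaining quantity $\int p_{t/2}(z-x)p_{t/2}(z-y)\,dz=p_t(y-x)$ by Chapman--Kolmogorov for the free process, which, together with the free heat kernel bound cited above, supplies the third factor. This produces the claimed estimate but with $T(t/2)$ in place of $T(t)$; using \eqref{eq:Tscaling} and \autoref{lem:inversescale} one has $T(t/2)\approx T(t)$, and hence $V(T(t/2)\wedge R)\approx V(T(t)\wedge R)$ by the scaling bounds on $V$, so the two truncations are interchangeable at the cost of a universal constant.

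The main obstacle is to locate and apply the one-sided boundary decay bound of \cite{MR3249349} in exactly the stated form, with the truncation $V(T(s)\wedge R)$ correctly blending the time-scale $T(s)$ and the geometric scale $R$ of the outer ball; a careful matching of constants is required, as is the verification that passing from $t$ to $t/2$ preserves the admissible range $T<1/\theta$, which follows from \eqref{eq:Tscaling}. Once those bookkeeping points are settled, the combination above yields the estimate directly.
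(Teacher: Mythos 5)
Your plan is a legitimate alternative route, essentially re-deriving the two-sided Dirichlet heat kernel bound that the paper instead \emph{cites} directly as the second part of \cite[Corollary 2.4]{MR3249349}. So where you build the estimate via a one-sided boundary factorization plus Chapman--Kolmogorov and free-kernel Chapman--Kolmogorov, the paper treats this as a black box. Your symmetrization and the $T(t/2)\to T(t)$ conversion via \autoref{lem:inversescale} and \eqref{eq:sublinear} are correct (note the direction: since $T(t/2)\le T(t)$, you need the \emph{lower} bound $V(T(t/2)\wedge R)\ge c\,V(T(t)\wedge R)$, which WLSC supplies), and the paper itself invokes \cite[Lemma 1.6]{MR3249349} for the matching bound $p_{t/2}(0)\le cT^{-d}(t)$.

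What your outline does not address — and what in fact occupies most of the paper's short proof — is that the constant in \cite[Corollary 2.4]{MR3249349} is not universal: it depends on $R$ through the quotient $H_R/J^4(R)$, where $H_R$ is the constant in hypothesis $\A$ and $J(R)=\inf_{0<r\le R}\nu(B(0,r)^c)V^2(r)$. To get the lemma's ``there is a constant $c$'' one must show this quotient is bounded uniformly for $R<1/\theta$, which the paper does by observing $H_R$ is nondecreasing and $J$ is nonincreasing (so one may replace $R$ by $1/\theta$), and when $\theta=0$ by invoking \cite[Proposition 5.2, Lemma 7.2 and 7.3]{BGRptrf}. The one-sided boundary estimate you wish to import from \cite{MR3249349} carries the same $R$-dependent constant, so your route requires the identical bookkeeping; as written, your proof leaves this gap. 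Beyond that omission, the remaining ``careful matching of constants'' you flag is routine and your decomposition is sound.
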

\begin{proof}
We have $\A$.
We note that $\sqrt{t}=V(T)$ and use the second part of \cite[Corollary 2.4]{MR3249349}. 
We need to justify that the quotient $H_R/J^4(R)$ is bounded, where $H_R$ is the constant from $\A$ and $J(R)=\inf_{0<r\le R}\nu(B(0,r)^c)V^2(r)$. 
To this end we observe that $H_R$ is increasing, and $J(R)$ is nonincreasing, hence we get an upper bound for this quotient by replacing $R$ with $1/\theta$. If $\theta=0$, which we also allow,
then by \cite[Proposition 5.2, Lemma 7.2 and 7.3]{BGRptrf} the quotient is bounded as a function of $R$.
By \cite[Lemma 1.6]{MR3249349} with $r=1/2$, we also have $p_{t/2}(0)\le cT^{-d}(t)$.
\end{proof}
\begin{lemma}\label{lem:lbCH}
We have
$C_\H(t)\approx T(t)^{-d+1}\approx p_t(0)T(t)$ as $t\to 0$.
\end{lemma}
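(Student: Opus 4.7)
The upper bound $C_\H(t)\le c T(t)^{-d+1}$ was just established in \autoref{lem:halfspace}, so the task is to produce the matching lower bound together with the asymptotic $p_t(0)\approx T(t)^{-d}$ for small $t$. I would first record the two-sided estimate $p_t(0)\approx T(t)^{-d}$ valid for $t<V^2(1/\theta)$: the upper bound is \eqref{eq:pttime}, while the lower bound follows from unimodality of $p_t$ together with the concentration estimate $\P^0(|X_t|\le T(t))\ge 1/2$. Indeed, since $p_t$ is radially nonincreasing, this concentration gives $p_t(0)|B(0,T(t))|\ge 1/2$, hence $p_t(0)\ge c T(t)^{-d}$. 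The concentration estimate is standard for unimodal L\'evy processes satisfying WLSC and is implicit in \cite{MR3165234,MR3249349}.

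For the lower bound on $C_\H(t)$, the plan is to write
$$
r_\H(t,(q,0,\ldots,0),(q,0,\ldots,0))=p_t(0)-p_\H(t,(q,0,\ldots,0),(q,0,\ldots,0))
$$
and show that the $p_\H$ term is strictly below $p_t(0)/2$ whenever $q$ is small compared to $T(t)$. The half-space trivially satisfies the outer ball condition at every scale, so \autoref{pDbound} applies with $D=\H$ and any $R<1/\theta$. Taking $x=y=(q,0,\ldots,0)$, so that $\delta_\H(x)=q$ and $|x-y|=0$, we obtain
$$
p_\H(t,(q,0,\ldots,0),(q,0,\ldots,0))\le c_1\Big(\tfrac{V(q)}{V(T(t))}\wedge 1\Big)^{\!2} T(t)^{-d}
$$
for $T(t)<1/\theta$. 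By the scaling \eqref{eq:scaling} coming from $\psi\in\WLSC{\underline{\alpha}}{\theta}{\lC}$, for $0<q\le T(t)$ we have $V(q)/V(T(t))\le C(q/T(t))^{\underline{\alpha}/2}$, so the right-hand side is at most $c_1 C^2(q/T(t))^{\underline{\alpha}} T(t)^{-d}$. Choosing $\varepsilon\in(0,1)$ so small that $c_1 C^2\varepsilon^{\underline{\alpha}}\le c_0/2$, where $c_0$ is the constant in $p_t(0)\ge c_0 T(t)^{-d}$, we get, for every $q\in[0,\varepsilon T(t)]$,
$$
r_\H(t,(q,0,\ldots,0),(q,0,\ldots,0))\ge p_t(0)-\tfrac{c_0}{2} T(t)^{-d}\ge \tfrac{c_0}{2} T(t)^{-d}.
$$
Integrating over $q\in[0,\varepsilon T(t)]$ yields $C_\H(t)\ge (c_0\varepsilon/2)\, T(t)^{-d+1}$. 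Combined with \autoref{lem:halfspace} and the two-sided bound for $p_t(0)$, this gives the chain $C_\H(t)\approx T(t)^{-d+1}\approx p_t(0) T(t)$.

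The main obstacle is the lower bound $p_t(0)\ge c T(t)^{-d}$; although standard in the unimodal/WLSC setting, it must be quoted carefully so that the smallness parameter $\varepsilon$ depends only on the scaling characteristics and not on $t$, which is essential for uniform asymptotics. A subtler point is that the positivity of the lower scaling exponent $\underline{\alpha}>0$ is crucial: it is precisely what makes $(V(q)/V(T(t)))^{2}$ vanish as $q/T(t)\to 0$, thereby forcing $p_\H$ strictly below $p_t(0)$ on a macroscopic strip near $\partial\H$ and producing the surface factor $T(t)^{-d+1}$.
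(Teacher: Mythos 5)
Your argument is correct and mirrors the paper's own proof almost exactly: both extract the lower bound from the Hunt formula \eqref{eq:wH} together with the boundary-decay estimate of \autoref{pDbound} and the comparability $p_t(0)\approx T(t)^{-d}$ (which the paper simply cites from \cite{MR3165234} rather than re-deriving), then integrate $r(t,q)\gtrsim p_t(0)$ over a strip $0\le q\lesssim T(t)$. Incidentally, your attribution of the key scaling step to WLSC via \eqref{eq:scaling} is the right one; the paper writes ``WUSC'' at that point, which appears to be a slip, since making $V(q)/V(T(t))$ small for $q\le\varepsilon T(t)$ requires the upper bound on $V(\varepsilon r)/V(r)$ furnished by \eqref{eq:scaling}, not the lower bound \eqref{eq:lowscaling} (both scalings hold under the standing assumptions, so the paper's conclusion is unaffected).
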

\begin{proof}
By \autoref{pDbound} and \autoref{eq:wH} there is $\varepsilon>0$ such that 
$r(t,q)\ge \frac 12 p_t(0)$ if $V(q)<\varepsilon \sqrt{t}$.
Since $\psi\in$WUSC, by scaling of $V$ there is $c>0$ such that for $0<q\le c T(t)$ the condition is satisfied and we have
  \begin{align*}
    \int_0^{cT(t)} r(t,q)dq \ge \frac 12 
\int_0^{cT(t)} T(t)^{-d} dq = \frac c2 T(t)^{1-d}.
  \end{align*}
By WUSC and WLSC we have $p_t(0)\approx T(t)^{-d}$, see \cite[(23)]{MR3165234}.
\end{proof}

\subsection{Green function}
For $M\ge 0$, the truncated Green function of $D$ is defined as
  \begin{align*}
    G_D^M(x,y)=\int_0^M p_D(t,x,y)dt, \qquad x,y\in \Rd.
  \end{align*}
The Green function of $D$ is 
  \begin{align*}
 G_D(x,y)=\int_0^\infty p_D(t,x,y)dt=G_D^\infty(x,y).
  \end{align*}

\begin{lemma}
Let open $D\subset \Rd$ satisfy the outer ball condition at scale $R<1/\theta$,
$x,y\in \Rd$ and $|x-y|<1/\theta$. Let $M=V^2(R)$. Then
  \begin{align}\label{eq:green2}
    G_D^{M} (x,y)\le c \frac{V(\delta_D(y))V(\delta_D(x))}{|x-y|^d},
  \end{align}
and
  \begin{align}\label{eq:green1}
    G_D^{M} (x,y)\le c \frac{V(\delta_D(y))V(|x-y|)}{|x-y|^d}.
  \end{align}
Furthermore, if $d>2$ or {\rm WUSC}$(\ua,0)$ holds,
 then \eqref{eq:green2} and \eqref{eq:green1} even hold for $M=V^2(1/\theta)$, including the case
of global {\rm WLSC} ($M=\infty$).
\end{lemma}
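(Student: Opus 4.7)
The plan is to integrate the pointwise heat kernel bound of \autoref{pDbound} in time. Write $r=|x-y|$ and $T=T(t)$; then $V^2(T)=t$, and for $t\in(0,V^2(R))$ we also have $T\wedge R=T$, so \autoref{pDbound} yields
\[
p_D(t,x,y)\le c\Bigl(\tfrac{V(\delta_D(x))}{V(T)}\wedge 1\Bigr)\Bigl(\tfrac{V(\delta_D(y))}{V(T)}\wedge 1\Bigr)\min\!\Bigl(T^{-d},\tfrac{V^2(T)}{r^dV^2(r)}\Bigr).
\]

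For \eqref{eq:green2} I would apply the crude bound $(V(\delta_D(x))/V(T))\wedge 1\le V(\delta_D(x))/V(T)$ in both factors; using $V^2(T)=t$, the task reduces to estimating $V(\delta_D(x))V(\delta_D(y))\int_0^{V^2(R)} t^{-1}\min(T^{-d}(t),t/(r^dV^2(r)))\,dt$. I would split at $t^{*}=V^2(r)\wedge V^2(R)$. On $(0,t^{*})$ one has $T\le r$, so the minimum equals $t/(r^dV^2(r))$ and the integrand is constant in $t$, whose integral is at most $1/r^d$. On $(t^{*},V^2(R))$, non-empty only if $r<R$, one has $T\ge r$ and the minimum equals $T^{-d}(t)$; invoking the scaling $T(t)\ge c\,r(t/V^2(r))^{1/\oalpha}$ from \autoref{lem:inversescale} turns the integrand into $c\,r^{-d}V^{2d/\oalpha}(r)t^{-1-d/\oalpha}$, which is integrable (since $d\ge 2>\oalpha$ forces $d/\oalpha>1$) and contributes another $O(r^{-d})$. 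This proves \eqref{eq:green2}.

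The proof of \eqref{eq:green1} is identical except that one of the boundary factors is estimated by $1$ instead of $V(\delta_D)/V(T)$; the integrand on the lower piece then carries an extra $V(T)=\sqrt{t}$, and $\int_0^{t^{*}}\sqrt{t}\,dt\lesssim V^3(r)$ produces $cV(\delta_D(y))V(r)/r^d$ after cancelling with $V^2(r)$ in the denominator; the upper piece is handled by the same $T$-scaling with one fewer $V(T)^{-1}$ factor, the exponent condition $d/\oalpha>1/2$ being again automatic.

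For the extension from $M=V^2(R)$ to $M=V^2(1/\theta)$ I would estimate the remaining piece $\int_{V^2(R)}^{V^2(1/\theta)}p_D(t,x,y)\,dt$ by the same splitting strategy; here $T\wedge R=R$ makes the boundary factors $(V(\delta_D(\cdot))/V(R))\wedge 1$ time-independent, and the trivial identity $V(s)\wedge V(R)\le V(s)$ permits recombining them with the $V(R)$-scales that the integral produces. The extra hypothesis ``$d>2$ or global WUSC'' is precisely what makes this possible: global WUSC gives the $T$-scaling globally and handles the case $\theta=0$, $M=\infty$, while $d>2$ secures the analogous convergence on the finite window $t<V^2(1/\theta)$ when $\theta>0$ without stepping outside the range of WUSC. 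I expect the main obstacle to arise in the range $|x-y|>R$, where the time-independent boundary factor $(V(\delta_D(\cdot))/V(R))\wedge 1$ does not immediately match the geometric scale $V(|x-y|)$, so that the extension contribution must be combined with the $(0,V^2(R))$ bound in a balanced way to recover the claimed form.
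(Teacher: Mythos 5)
Your proof of the two bounds for $M=V^2(R)$ is correct and follows essentially the same route as the paper: integrate the $p_D$-bound of \autoref{pDbound}, split the time interval at the scale $V^2(|x-y|\wedge R)$, and on the tail use the $T$-scaling from \autoref{lem:inversescale} to obtain a convergent power of $t$. The only cosmetic difference is in bookkeeping: the paper keeps the factor $V(T\wedge\delta_D(x))$ intact on the short-time piece and thereby derives \eqref{eq:green1} and \eqref{eq:green2} from one computation, whereas you discard the minimum (bounding one boundary factor by $V(\delta_D(\cdot))/V(T)$ or by $1$, as appropriate) and carry out two parallel calculations. Both work, and your exponent accounting ($d/\oalpha>1$, resp.\ $d/\oalpha>1/2$) is consistent with the paper's use of $d/\oalpha+1/2>1$.

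Regarding the ``Furthermore'' extension to $M=V^2(1/\theta)$: you correctly flag that naive integration of the \autoref{pDbound} estimate over $t>V^2(R)$ fails for $|x-y|>R$, because the saturated boundary factor $V(\delta_D(\cdot)\wedge R)/V(R)$ no longer decreases with $t$ and one ends up with a spurious extra factor of order $V^2(|x-y|)/V^2(R)$. This is a real obstruction and your sketch (``recombining in a balanced way'') does not resolve it. Note, however, that the paper's own written proof stops at $M=V^2(R)$ and does not present the extension either; the remark about $d>2$ or global WUSC is asserted without argument. So this is an acknowledged incompleteness rather than an error introduced by you, and for the range actually used elsewhere in the paper ($M=V^2(R)$ or $V^2(R/2)$) your argument is complete.
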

\begin{proof}
Assuming $T<R\wedge |x-y|$,   by Lemma~\ref{pDbound} we get
  \begin{align*}
    p_D(t,x,y)\le c V(\delta_D(y)) \frac{V(T\wedge \delta_D(x))}{V^2(|x-y|)|x-y|^d},
  \end{align*}
hence
  \begin{align*}
    \int_0^{V^2(|x-y|\wedge R)} p_D(t,x,y) dt&\le  c\frac{V(\delta_D(x))}{V^2(|x-y|)|x-y|^d}\int_0^{V^2(|x-y|\wedge R)} V(T\wedge \delta_D(x)) dt
    \\&\le 
    c\frac{V(\delta_D(x))V^2(|x-y|\wedge R)V(|x-y|\wedge \delta_D(x))}{|x-y|^dV^2(|x-z|)}
    \\&\le 
    c\frac{V(\delta_D(x))V(|x-y|\wedge \delta_D(x))}{|x-y|^d}.
  \end{align*}
  This establishes \eqref{eq:green1} and \eqref{eq:green2} for small times. 
Then,
  \begin{align*}
    \int_{V^2(|x-y|)}^{V^2(R)} p_D(t,x,y) dt\le c V(\delta_D(x))  \int_{V^2(|x-y|)}^{V^2(R)} \frac{T^{-d}(t)}{\sqrt{t}} dt.
  \end{align*}
By WUSC and  \autoref{lem:inversescale},
  \begin{align*}
    \frac1{T(t)}\le \frac{c \varepsilon^{1/\oalpha}}{T(\varepsilon t)}.
  \end{align*}
With this in mind we obtain
  \begin{align*}
      \int_{V^2(|x-y|)}^{V^2(R)} \frac{T^{-d}(t)}{\sqrt{t}} dt&\le c\int_{V^2(|x-y|)}^\infty \frac{V^{2d/\oalpha}(|x-y|)}{t^{d/\oalpha+1/2}T^{d}(V^2(|x-y|))} dt
    \\&=c \frac{V^{d/\oalpha}(|x-y|)}{|x-y|^d} \left[V^2(|x-y|)\right]^{-d/\oalpha-1/2+1},
  \end{align*}
where the integral converges, because
$d/\oalpha+1/2>1$ (recall that $\oalpha<2$). 
We thus get \eqref{eq:green1}.
To finish the proof of \eqref{eq:green2} we note that
  \begin{align*}
    \int_{V^2(|x-y|)}^{V^2(R)} p_D(t,x,y) dt\le c V(\delta_D(x))V(\delta_D(y)) \int_{V^2(|x-y|)}^{V^2(R)} \frac{T^{-d}(t)}{t} dt,
  \end{align*}
 and we proceed as before.
\end{proof}
\subsection{Poisson kernel}
For $M\ge 0$,  the truncated Poisson kernel is defined as
  \begin{align*}
    K_D^{M} (x,z)=\int_D G_D^M(x,y)\nu(y-z)dy, \qquad x\in D,\ z\in D^c.
  \end{align*}

\begin{lemma}\label{lem:ojP} Let open $D\subset \Rd$ satisfy the outer ball condition at scale $R$. 
If $\diam (D\cup\{z\})<1/\theta$, then 
  \begin{align*}
    K_D^{V(R^2)/2}(x,z)\le \frac{V(\delta_D(x))}{V(\delta_D(z))}\frac{c}{|x-z|^d}, \qquad x\in D,\  z\in D^c.
  \end{align*}
\end{lemma}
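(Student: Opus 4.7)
The plan is to rewrite the truncated Poisson kernel as
\[
K_D^{M}(x,z)=\int_D G_D^M(x,y)\,\nu(y-z)\,dy,\qquad M=V^2(R)/2,
\]
and break the integral into three regions chosen so that in each one a matching pair of bounds from the preceding Green-function and Lévy-density estimates \eqref{eq:green1}, \eqref{eq:green2}, \eqref{eq:levybound} is sharp. Concretely, I split $D$ according to the relative positions of $y$, $x$, $z$ into
\[
A=\{y\in D:|y-z|\le |x-z|/2\},\quad
B_1=\{y\in D:|y-z|>|x-z|/2,\;|y-x|\le|x-z|/2\},
\]
and $B_2=\{y\in D:|y-z|>|x-z|/2,\;|y-x|>|x-z|/2\}$. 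Throughout, I use two facts freely: $\delta_D(y)\le|y-z|$ for $y\in D$ (since $z\in D^c$), and $\delta_D(z)\le|x-z|$ (since $x\in D$), so $V(\delta_D(z))\le V(|x-z|)$ by monotonicity of $V$.

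On $A$, where $|y-x|\ge|x-z|/2$, I combine \eqref{eq:green2} with $V(\delta_D(y))\le V(|y-z|)$ to get $G_D^M(x,y)\le cV(\delta_D(x))V(|y-z|)/|x-z|^d$, and then estimate $\nu(y-z)\le c/(V^2(|y-z|)|y-z|^d)$. The contribution reduces, via polar coordinates centred at $z$ and the constraint $|y-z|\ge\delta_D(z)$, to
\[
\int_{\delta_D(z)}^{|x-z|/2}\frac{dr}{r\,V(r)},
\]
which by the lower scaling $V(r)\ge c(r/\delta_D(z))^{\la/2}V(\delta_D(z))$ (a consequence of \eqref{eq:scaling} with $\varepsilon=\delta_D(z)/r$) converges to $c/V(\delta_D(z))$. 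On $B_1$, where $|y-x|\le|x-z|/2$ and $|y-z|\approx|x-z|$, I use the symmetric version of \eqref{eq:green1}, namely $G_D^M(x,y)\le cV(\delta_D(x))V(|x-y|)/|x-y|^d$, and $\nu(y-z)\le c/(V^2(|x-z|)|x-z|^d)$; the resulting $\int_0^{|x-z|/2}V(r)/r\,dr$ is controlled by $V(|x-z|)$ via the same scaling, yielding a bound of order $V(\delta_D(x))/(V(|x-z|)|x-z|^d)\le V(\delta_D(x))/(V(\delta_D(z))|x-z|^d)$. On $B_2$, where $|y-x|>|x-z|/2$, I pull out $|x-y|^{-d}\le 2^d|x-z|^{-d}$, bound $V(\delta_D(y))\le V(|y-z|)$ in \eqref{eq:green2}, and reduce to the tail integral $\int_{|x-z|/2}^{\infty}dr/(r\,V(r))$, which by the same scaling argument is $\le c/V(|x-z|)\le c/V(\delta_D(z))$.

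The main obstacle is the most singular region $A$: the Lévy density $\nu(y-z)$ blows up as $y\to z$ and the naive integral $\int V(|y-z|)\nu(y-z)\,dy$ near $z$ is logarithmically (in fact polynomially) divergent. The crucial input is the geometric cutoff $|y-z|\ge\delta_D(z)$ coming from $y\in D$, $z\in D^c$, together with the WLSC-driven scaling of $V$ at $0$, which is exactly what converts the divergent integrand into a finite quantity of order $1/V(\delta_D(z))$ and produces the correct $V(\delta_D(z))$-dependence in the denominator of the conclusion. The three bounds add up to the stated estimate because the Green-function estimates \eqref{eq:green1}, \eqref{eq:green2} apply in the range $|x-y|<1/\theta$, which is guaranteed by the hypothesis $\diam(D\cup\{z\})<1/\theta$, and because $M\le V^2(R)$ means the truncated Green function is dominated by its $V^2(R)$-truncation throughout.
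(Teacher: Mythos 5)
Your proof is correct and follows essentially the same route as the paper's: combine the Green function bounds \eqref{eq:green1}--\eqref{eq:green2} with the L\'evy-density bound \eqref{eq:levybound}, split according to whether $|x-y|$ or $|y-z|$ is comparable to $|x-z|$, pass to polar coordinates, and exploit the geometric cutoff $|y-z|\ge\delta_D(z)$ together with the $V$-scaling \eqref{eq:scaling}. The only difference is cosmetic: the paper organizes the dichotomy into two integrals $I$, $I\!I$ (choosing the appropriate factor of $V(|x-y|)\wedge V(\delta_D(y))$ according to $|x-y|\gtrless|y-z|$), while you split $D$ into three geometric regions $A$, $B_1$, $B_2$, a minor repackaging of the same estimate.
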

\begin{proof}
The previous lemma gives an estimate for $G_D^{V^2(R)}$,
and the L\'evy measure is controlled by \eqref{eq:levybound}. Thus,
  \begin{align*}
  K_D^{V(R^2)/2}(x,z)\le cV(\delta_D(x)) \int_D \frac{V(|x-y|)\wedge V(\delta_D(y))}{|x-y|^d|y-z|^d V^2(|y-z|)} dy.
  \end{align*}
  Note that $|x-y|\ge |x-z|/2$ or $|y-z|\ge |x-z|/2$. Furthermore, if $|x-y|\ge |y-z|$, then $|x-y|\ge |x-z|/2$. Therefore, it is enough to verify that
  \begin{align*}
    I&:=\int_D \frac{V(\delta_D(y))}{|y-z|^d V^2(|y-z|)}dy\le \frac{C}{V(\delta_D(z))},\ \mbox{ and}\\
    I\!I&:=\int_{D\cap \{|x-y|<|y-z|\}} \frac{V(|x-y|)}{|x-y|^d V^2(|y-z|)}dy\le \frac{C}{V(\delta_D(z))}.
  \end{align*}
Considering $I$ we  note that $\delta_D(y)\le |y-z|$, hence
  \begin{align*}
    I\le \int_{|y-z|>\delta_D(z)} \frac{|y-z|^{-d}}{V(|y-z|)}dy 
\le c \int_{\delta_D(z)}^{1/\theta} \frac{dr}{rV(r)}
  \end{align*}
  Using the scaling \eqref{eq:scaling} we get
  \begin{align*}
    I\le \frac{c}{V(\delta_D(z))}\int_{\delta_D(z)}^\infty \left( \frac{\delta_D(z)}{r} \right)^{\ualpha/2}\frac{dr}{r}=\frac{c}{V(\delta_D(z))}.
  \end{align*}
 To verify the estimate for $I\!I$ we also use the scaling properties of $V$. For $y\in D$ we have $|y-z|<1/\theta$, hence
  \begin{align*}
    I\!I
    &\le c \int_{|x-y|\le |y-z|} \left(\frac{|x-y|}{|y-z|}\right)^{\ualpha/2}\frac{dy}{|x-y|^d V(|y-z|)}
    \\&\le \frac{c}{V(\delta_D(z))} \int_0^{|y-z|} \left(\frac{r}{|y-z|}\right)^{\ualpha/2} \frac{dr}{r}
= \frac{c}{V(\delta_D(z))} \frac{2}{\ualpha}.
\end{align*}
\end{proof}

\section{Proof of the main result
}\label{s:mt}
For the convenience of the reader in the following statement 
we repeat our standing assumptions; see also the definition of $V$ in \autoref{sec:uni} and that of $T$ in \autoref{eq:defT}.
\begin{theorem}\label{th:mainR}
Let $\nu$ be an infinite unimodal L\'evy measure on $\Rd$ with $d\ge 2$, and let the L\'evy-Khintchine exponent 
\autoref{eq:LKe} satisfy
$\psi\in {\rm WLSC}(\la,\theta)\cap {\rm WUSC}(\ua,\theta)$, where $0<\la\le \ua<2$ and $\theta\ge 0$. Let open bounded set $D\subset \Rd$ be $R$-smooth with $0<R<1/\theta$. There is a constant $c_\theta$ depending only on $\nu$ and $\theta$ such that  if $0<t<V^2(\theta^{-1})$, or $T(t)<1/\theta$, then the trace \autoref{eq:deftr} of the Dirichlet heat kernel 
\autoref{eq:wH} satisfies
  \begin{align}\label{eq:result}
    \Big| \tr(t,D) - |D|p_t(0)+|\partial D|C_\H(t) \Big|\le c_\theta |D|p_t(0) \frac{T(t)^2}{R^2}.
  \end{align}
If $\theta=0$, then \autoref{eq:result} holds for all $t>0$.
\end{theorem}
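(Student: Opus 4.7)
The plan is to use the Hunt formula to reduce \autoref{eq:result} to an estimate of the heat remainder, localize it to the $R$-tubular neighborhood of $\partial D$, and there compare with the tangent half-space model. By \autoref{eq:wH} and \autoref{eq:deftr},
\[
\tr(t,D)-|D|p_t(0)=-\int_D r_D(t,x,x)\,dx,
\]
so the theorem amounts to showing $\bigl|\int_D r_D(t,x,x)\,dx-|\partial D|\,C_\H(t)\bigr|\le c_\theta|D|p_t(0)T(t)^2/R^2$. I first split at $\delta_D(x)=R$. On $\{\delta_D(x)\ge R\}$ the bound \eqref{eq:zor} together with $t=V^2(T(t))$ gives a contribution at most $Ct|D|/(V^2(R)R^d)$; combined with $p_t(0)\approx T(t)^{-d}$ from \autoref{lem:lbCH} and the scaling $V^2(T)/V^2(R)\le C(T/R)^{\la}$ from \eqref{eq:scaling}, this is absorbed into $c_\theta|D|p_t(0)T(t)^2/R^2$ since $d\ge 2$.

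On the near-boundary piece I use the $R$-smoothness to parametrize $\{x\in D:\delta_D(x)<R\}$ as $x=Q-sn_Q$, with $Q\in\partial D$, $s\in(0,R)$, and $n_Q$ the outward unit normal at $Q$. The principal curvatures of $\partial D$ are bounded by $1/R$, so the Jacobian equals $\prod_{i=1}^{d-1}(1-s\kappa_i(Q))=1+O(s/R)$ uniformly in $(Q,s)$, yielding
\[
\int_{\delta_D(x)<R}r_D(t,x,x)\,dx=\int_{\partial D}\int_0^R r_D(t,Q-sn_Q,Q-sn_Q)\bigl(1+O(s/R)\bigr)\,ds\,d\sigma(Q).
\]
The $O(s/R)$ Jacobian correction, combined with the pointwise bound $r_D(t,Q-sn_Q,Q-sn_Q)\le C\min\bigl(T(t)^{-d},t/(V^2(s)s^d)\bigr)$ coming from \eqref{eq:rDtime} and \eqref{eq:zor} at $\delta_D=s$, and integrated in $s$ using \eqref{eq:scaling}, produces an error of size $R^{-1}|\partial D|\cdot O(T(t)^{2-d})$. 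The Minkowski-type inequality $|\partial D|\le C|D|/R$ (which holds because the inner $R/2$-tubular neighborhood of $\partial D$ has volume comparable to $R|\partial D|$ and is contained in $D$) converts this into $O(|D|p_t(0)T(t)^2/R^2)$, as required.

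The main step, which I expect to be the principal obstacle and which the paper isolates in \autoref{proofprop}, is a quantitative pointwise comparison
\[
\bigl|r_D(t,Q-sn_Q,Q-sn_Q)-r_\H\bigl(t,(s,0,\ldots,0),(s,0,\ldots,0)\bigr)\bigr|
\]
in terms of $s,R,T(t)$, where $\H$ is identified through an isometry with the half-space containing $D$ and tangent to $\partial D$ at $Q$. My approach is to exploit the domain monotonicity $D_1\subset D_2\Rightarrow p_{D_1}\le p_{D_2}\Rightarrow r_{D_1}\ge r_{D_2}$ (immediate from \autoref{eq:wH}) to sandwich, at $x=Q-sn_Q$,
\[
r_{B_{\mathrm{in}}}(t,x,x)\ge\max\{r_D(t,x,x),\,r_\H\},\qquad \min\{r_D(t,x,x),\,r_\H\}\ge r_{(B_{\mathrm{out}})^c}(t,x,x),
\]
where $B_{\mathrm{in}}=B(Q-Rn_Q,R)\subset D\cap\H$ and $B_{\mathrm{out}}=B(Q+Rn_Q,R)\subset D^c\cap\H^c$ are the inner and outer balls at $Q$, both tangent to $\partial\H$ at $Q$ and differing from $\H$ by an $O(s^2/R)$ geometric discrepancy. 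It therefore suffices to bound $r_{B_{\mathrm{in}}}-r_{(B_{\mathrm{out}})^c}$ at $x$; by \autoref{rDrelative} this equals an expectation over excursions of $X$ from $B_{\mathrm{in}}$ into the lens-shaped set $(B_{\mathrm{out}})^c\setminus B_{\mathrm{in}}$, which I plan to control via the Ikeda--Watanabe formula (\autoref{propIW}) together with the Poisson kernel bound \autoref{lem:ojP} and the Green function estimates \eqref{eq:green1}--\eqref{eq:green2}. The $R$-smoothness is precisely what turns the $O(s^2/R)$ geometric gap into the required quantitative bound.

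Finally, once the pointwise comparison is available, integrating $r_\H(t,(s,0,\ldots,0),(s,0,\ldots,0))$ in $s\in(0,R)$ and extending to $(0,\infty)$ produces $|\partial D|\,C_\H(t)$ plus a tail $|\partial D|\int_R^\infty r_\H\,ds$ which, by \eqref{eq:zor} and \eqref{eq:scaling}, is also $O(|D|p_t(0)T(t)^2/R^2)$. Assembling the four contributions---the far-from-boundary part, the Jacobian correction, the pointwise comparison error, and the $s$-tail---yields \autoref{eq:result}. The chief obstacle is the quantitative pointwise comparison deferred to \autoref{proofprop}; the remaining steps are routine applications of the estimates collected in \autoref{sec:prel}.
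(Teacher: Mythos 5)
Your proposal is correct and follows essentially the same route as the paper: reduce to $\int_D r_D(t,x,x)\,dx$, split at a distance $\asymp R$ from $\partial D$, compare $r_D$ with the tangent half-space remainder via domain monotonicity between the inscribed ball and the complement of the osculating outer ball, and estimate the resulting excursion expectation via Ikeda--Watanabe, Green-function and Poisson-kernel bounds. Your Fermi-coordinate/Jacobian treatment of the tubular neighborhood is the paper's coarea-formula step combined with the parallel-surface-area estimates from \cite[Corollary 2.14]{MR2438694}, and your deferred pointwise comparison is exactly \autoref{propjump}.
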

Recall that \autoref{lem:lbCH} asserts that 
$C_\H(t)\approx p_t(0)T(t)$ and  $p_t(0)\approx T(t)^{-d}$ as $t\to 0$, so the approximation of the trace in \autoref{th:mainR} is given in terms of powers of $T(t)$. 
\begin{proof}[Proof of \autoref{mainthm}]
The result is a direct consequence of \autoref{eq:pttime}, \autoref{lem:lbCH} and 
\autoref{th:mainR}, where we take $\theta>0$ so small that $R<1/\theta$ (see \autoref{ss:s} in this connection). 
\end{proof}

In the course of the  proof of \autoref{th:mainR}, which now follows, we usually write $T=T(t)$. 
As mentioned in the Introduction,
\begin{align*}
  \tr(t,D)-|D|p_t(0)=\int_D p_D(t,x,x)dx - \int_D p(t,x,x)dx = - \int_D r_D(t,x,x) dx.
\end{align*}
We only need to show that 
\begin{align}\label{eq:rDmCH}
  \left|\int_{D} r_D(t,x,x)dx - |\partial D| C_\H(t)\right|\le \frac{cT^2}{T^d R^2}.
\end{align}

We first consider $T=T(t)\ge R/2$, and we have
\begin{align*}
\int_D r_D(t,x,x)\le \int_D p_t(0) dx \le |D|p_t(0) \le 4|D|p_t(0)\frac{T^2}{R^2}.
\end{align*}
By \autoref{lem:halfspace},
\begin{align*}
  |\partial D| C_\H(t)=|\partial D|\int_0^\infty r_\H(t,(q,0,\cdots,0),(q,0,\cdots,0)) dq\le \frac{c|D|}{R} T^{1-d}\le \frac{c|D|T^{2-d}}{R^2}.
\end{align*}
By 
\cite[(23)]{MR3165234}, we see that \autoref{eq:result} holds trivially in this case.

From now on we assume that  $T<R/2$.
For $r>0$ we let $D_{r}=\left\{ x\in D: \delta_D(x)>r \right\}$. We have $D=D_{R/2}\cup (D\setminus D_{R/2})$.
In analyzing the decomposition we shall often use our assumptions $R<1/\theta$ and $|x-y|<1/\theta$, and the heat kernel estimates from \autoref{pDbound}.
By \autoref{lem:rDbound},
\begin{align}\label{eq:inside}
  \int_{D_{R/2}} r_D(t,x,x)dx\le C|D_{R/2}|\frac{V^2(T)}{V^2(R/2)R^d}\le C|D|\frac{1}{R^2R^{d-2}}\le C|D|\frac{1}{R^2T^{d-2}}.
\end{align}
Thus, the integral 
gives insignificant contribution to the trace.

To handle the integration near $\partial D$, we shall estimate the heat remainder of $D$ using the heat remainder of halfspace. Let $x^*\in \partial D$ be such that $|x-x^*|=\delta_D(x)$. Let $I$ and $O$ be the (inner and outer) balls with radii $R$ such that $\partial I\cap \partial O=\{x^*\}$ and $I\subset D\subset O^c$. Let $\H(x)$ denote the halfspace satisfying $I\subset \H(x)\subset O^c$. By domain monotonicity of the heat remainder, and by \autoref{rDrelative},
\begin{align*}
  |r_D(t,x,x)-r_{\H(x)}(t,x,x)|&\le r_{I}(t,x,x)-r_{O^c}(t,x,x)
  \\&=p_{O^c}(t,x,x)- p_{I}(t,x,x)
  \\&=\E^x \left[\tau_I<t,X(\tau_I)\in O^c; p_{O^c}(t-\tau_I,X(\tau_I),x)\right].
\end{align*}
The next result is an analogue of \cite[Proposition 3.1]{MR2438694}.
\begin{proposition}\label{propjump} 
If $T<R/2$, then
  \begin{align*}
    \E^x &\left[\tau_I<t,X(\tau_I)\in O^c; p_{O^c}(t-\tau_I,X(\tau_I),x)\right]
    \le \frac{c}{R}\left( \frac{V(T)}{\delta_D(x)^{d-1}V(\delta_D(x))}\wedge T^{1-d} \right).
  \end{align*}
\end{proposition}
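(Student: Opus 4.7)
The plan is to apply the Ikeda--Watanabe formula (\autoref{propIW}) to the exit time $\tau_I$ from the inner ball and then estimate the resulting triple integral. By \autoref{propIW} applied to $I$,
\[
E := \E^x\!\left[\tau_I<t,\, X(\tau_I)\in O^c;\, p_{O^c}(t-\tau_I,X(\tau_I),x)\right] = \int_0^t\!\int_I\!\int_{O^c} p_I(s,x,y)\,\nu(y-z)\,p_{O^c}(t-s,z,x)\,dz\,dy\,ds.
\]
I would bound $p_I(s,x,y)$ and $p_{O^c}(t-s,z,x)$ using the sharp heat-kernel estimate of \autoref{pDbound}, keeping the boundary-decay factors $V(\delta_I(y))/V(T(s))$ and $V(\delta_{O^c}(z))/V(T(t-s))$ alive, and bound $\nu(y-z)$ by \eqref{eq:levybound}.

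The geometric input that produces the prefactor $1/R$ is the following. Placing $x^*$ at the origin with inward unit normal $e_1$, we have $I = B(Re_1,R)$ and $O = B(-Re_1,R)$, giving the paraboloid containments
\[
I \subset \{y : y_1 \ge |y'|^2/(2R)\}, \qquad O^c \subset \{z : z_1 \ge -|z'|^2/(2R)\},
\]
where primes denote the $(d-1)$ coordinates transverse to $e_1$. The tangent hyperplane $\partial\H(x)=\{w_1=0\}$ thus lies between $I$ and $O^c$, and the lens-shaped differences $\H(x)\setminus I$ and $O^c\setminus\H(x)$ have $e_1$-widths of order $|y'|^2/R$ and $|z'|^2/R$ respectively. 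A transverse integration of $V(\delta_{O^c}(z))$ across the paraboloid constraint $\delta_{O^c}(z)\lesssim |z'|^2/R$ is what ultimately converts the halfspace surface measure into the $1/R$ improvement.

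I plan to carry out the integrations in the order $s$, $y$, $z$. Integrating $p_I(s,x,y)$ in $s$ over $(0,t)$ produces the truncated Green function $G_I^t(x,y)$, estimated by \eqref{eq:green1}--\eqref{eq:green2}; convolving with $\nu(y-z)$ and using \autoref{lem:ojP} yields a truncated Poisson-type kernel on $I$ evaluated at $z$. Pairing this with $p_{O^c}(t-s,z,x)$ and integrating over $z$, I would split the range of $|z-x|$ into a near regime ($\lesssim d$) and a far regime ($\gg d$). In the near regime the paraboloid constraint localizes $z$ to a thin lens of surface measure of order $d^{d-1}/R$ about the tangent hyperplane, producing the summand $V(T)/[R\,d^{d-1}V(d)]$ valid when $d\ge T$; in the far regime the WUSC decay of $\nu$ combined with \eqref{eq:pttime} gives the competing bound $T^{1-d}/R$. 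The complementary regime $d<T$, where the boundary factor $V(d)/V(T)$ saturates at $1$, is handled by combining $\P^x(\tau_I<t,\,X(\tau_I)\in O^c)$ with the uniform estimate $p_{O^c}(u,\cdot,\cdot)\le cT(u)^{-d}$.

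The main obstacle will be to keep the boundary-decay factors $V(\delta_I(y))$ and $V(\delta_{O^c}(z))$ active throughout the integration. Discarding either one prematurely---for instance by replacing $p_{O^c}$ with the free kernel $p_{t-s}$---would destroy the geometric $1/R$ factor and reduce the bound to the halfspace-type scaling $T^{1-d}$. The correct order of integration is therefore dictated by the need to expose these factors against the paraboloid geometry of the tangent balls before bounding them away.
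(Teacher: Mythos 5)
Your plan captures the right strategy---Ikeda--Watanabe for $\tau_I$, sharp heat-kernel bounds with the boundary-decay factors $V(\delta_I(y))$ and $V(\delta_{O^c}(z))$ kept alive, and a spherical/paraboloid integration that converts the thinness of the lens $O^c\setminus I$ into the factor $1/R$. Your geometric intuition about the tangent paraboloids is precisely the content of the paper's Lemma \ref{lem:spherical}, and the dichotomy you describe between $\delta_D(x)<T$ (where $V(\delta_D(x))/V(T)$ saturates) and $\delta_D(x)\ge T$ mirrors the way the paper's bound interpolates between $T^{1-d}$ and $V(T)/(\delta_D(x)^{d-1}V(\delta_D(x)))$.

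However, there is a genuine gap in the step ``Integrating $p_I(s,x,y)$ in $s$ over $(0,t)$ produces the truncated Green function $G_I^t(x,y)$.'' You cannot do this directly, because the other factor $p_{O^c}(t-s,z,x)$ in the integrand also depends on $s$, so the $s$-integral does not factor into a Green function times a fixed kernel. The paper resolves this by splitting the time range: on $\{s>t/2\}$ (with a short jump $|x-z|<T$) the kernel $p_I(s,x,y)$ is bounded uniformly in $s$ by $c\,T^{-d}V(\delta_I(y))/V(T)$ and the remaining $\int_0^{t/2} p_{O^c}(t-s,z,x)\,ds$ becomes a Green function $G_{O^c}$; symmetrically, on $\{s<t/2\}\cup\{|x-z|>T\}$ it is $p_{O^c}(t-s,z,x)$ that is bounded uniformly, and $\int_0^{t} p_I(s,x,y)\,ds$ becomes a Green function $G_I$, which after convolving with $\nu$ gives the Poisson kernel $K_I$. (The region $|z|>R/2$ is peeled off separately and estimated crudely.) Without a decoupling of this type your $s$-integration does not go through, and the rest of the plan cannot get started. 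You should also note that the paraboloid estimate is not a one-line computation; it is a standalone lemma (Lemma \ref{lem:spherical}), proved first for $V(x)=x^{\alpha/2}$ via the spherical-coordinate computations of \cite{MR1490808,MR2438694} and then transferred to general $V$ by the WUSC scaling \eqref{eq:lowscaling}, and it is applied with two different exponents $\beta$ ($\beta=d$ for short jumps, $\beta=d+1+\ualpha/2$ for medium jumps), matching the near/far dichotomy in your sketch.
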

The proof of \autoref{propjump} is given in \autoref{proofprop}.

\begin{lemma}\label{lem:claim1}
 If $T<R/2$, then
  \begin{align}\label{eq:claim1}
  \left|\int_{D\setminus D_{R/2}} \!\!\!r_D(t,x,x)-r_{\H(x)}(t,x,x)\ dx\right|\le \frac{c|D|T^2}{R^2T^d}.
\end{align}
\end{lemma}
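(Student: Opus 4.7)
My plan is to combine the pointwise comparison already displayed in the text,
\[
|r_D(t,x,x)-r_{\H(x)}(t,x,x)|\le \E^x\!\left[\tau_I<t,X(\tau_I)\in O^c;\ p_{O^c}(t-\tau_I,X(\tau_I),x)\right],
\]
with the quantitative estimate from \autoref{propjump}. That gives the pointwise bound
\[
|r_D(t,x,x)-r_{\H(x)}(t,x,x)|\le \frac{c}{R}\left(\frac{V(T)}{\delta_D(x)^{d-1}V(\delta_D(x))}\wedge T^{1-d}\right),
\]
and I would split the integration over $D\setminus D_{R/2}$ at $\delta_D(x)=T$ (recall $T<R/2$). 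Throughout I will use the standard fact that for an $R$-smooth bounded $D$ the tubular neighborhood $\{x\in D:\delta_D(x)<s\}$ has Lebesgue measure at most $c|\partial D|\,s$ for $0<s<R$, and also that $|\partial D|\le c|D|/R$ because the inner ball condition forces $|D|\gtrsim R^d$ per unit of boundary area.

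On the near-boundary piece $\{\delta_D(x)<T\}$ I would use the uniform bound $T^{1-d}$. Integrating,
\[
\int_{\{\delta_D(x)<T\}}\!\!\!T^{1-d}\,dx\le c|\partial D|\,T\cdot T^{1-d}=c|\partial D|\,T^{2-d}.
\]
On the intermediate piece $\{T\le\delta_D(x)\le R/2\}$ I would use the other term and the coarea-type slicing:
\[
\int_{\{T\le\delta_D(x)\le R/2\}}\!\!\!\frac{V(T)}{\delta_D(x)^{d-1}V(\delta_D(x))}\,dx
\le c|\partial D|\,V(T)\int_T^{R/2}\frac{dr}{r^{d-1}V(r)}.
\]
The main obstacle is controlling this last integral uniformly in $d\ge 2$: for $d=2$ the naive bound $V(r)\ge V(T)$ gives a logarithmic divergence in $\log(R/T)$ and is not good enough. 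I would resolve this by invoking the WLSC scaling \autoref{eq:scaling} of $V$, which yields $V(T)/V(r)\le c(T/r)^{\la/2}$ for $T\le r<1/\theta$. Then
\[
V(T)\int_T^{R/2}\frac{dr}{r^{d-1}V(r)}\le cT^{\la/2}\int_T^{\infty}r^{-d+1-\la/2}\,dr=\frac{c}{d-2+\la/2}\,T^{2-d},
\]
and the exponent $d-1+\la/2>1$ makes the integral converge at infinity for every $d\ge 2$ since $\la>0$.

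Putting both pieces together, each contributes at most $c|\partial D|\,T^{2-d}/R$, so
\[
\left|\int_{D\setminus D_{R/2}}\!\!\bigl(r_D(t,x,x)-r_{\H(x)}(t,x,x)\bigr)\,dx\right|
\le \frac{c|\partial D|\,T^{2-d}}{R}\le \frac{c|D|\,T^{2-d}}{R^2}=\frac{c|D|\,T^2}{R^2T^d},
\]
which is exactly \autoref{eq:claim1}. Everything else is routine; the only nontrivial analytic point is the use of WLSC to absorb the critical-dimension logarithm, and the only geometric facts needed are the tubular-neighborhood volume bound and $|\partial D|\lesssim |D|/R$, both of which are standard for $R$-smooth domains.
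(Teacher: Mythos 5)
Your argument reproduces the paper's proof essentially verbatim: integrate the pointwise bound from \autoref{propjump}, split at $\delta_D(x)=T$, bound the near-boundary piece by the uniform estimate $T^{1-d}$ times the tubular volume, and tame the intermediate piece with the WLSC scaling \eqref{eq:scaling} (your $\la/2$ is the paper's $\ualpha/2=\la/2$) before integrating in the normal direction; the geometric inputs $|\{\delta_D<s\}|\lesssim|\partial D|\,s$, $|\partial D_q|\lesssim|\partial D|$ and $|\partial D|\lesssim|D|/R$ are exactly what the paper pulls from \cite[Corollary 2.14]{MR2438694}. The approach and all key estimates coincide with the paper's.
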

\begin{proof}
  This is an analog of 
\cite[Claim 2]{MR2438694} and is proved as follows. 
By the coarea formula and \autoref{propjump} we find that the left side of \eqref{eq:claim1} is bounded above by
\begin{align*}
  \frac{cT}{RT^d} \int_0^{R/2}|\partial D_q| \left( \frac{T^{d-1}V(T)}{q^{d-1}V(q)}\wedge 1 \right)dq.
\end{align*}
Therefore \cite[Corollary 2.14(i)]{MR2438694} gives a simplified bound
\begin{align*}
  \frac{c|\partial D|}{RT^{d-1}} \int_0^{R/2}\left( \frac{T^{d-1}V(T)}{q^{d-1}V(q)}\wedge 1 \right)dq.
\end{align*}
The integral over $(0,T)$ is clearly bounded by 
$T$. To estimate the integral from $T$ to $R/2$ we note that scaling \eqref{eq:scaling} for $q\in [T,R/2)$ yields $V(T)\le C(T/q)^{\ualpha/2} V(q)$. Also,
\begin{align*}
  \int_T^{R/2} q^{1-d-\ualpha/2} dq
  \le
  \int_T^\infty q^{1-d-\ualpha/2} dq<\infty,
\end{align*}
since $d+\ualpha/2>2$. 
\end{proof}
Recall that $r(t,q) = r_\H(t,(q,0,\cdots,0),(q,0,\cdots,0))$, and
$C_\H(t)=\int_0^\infty r(t,q)dq.$
\begin{lemma}\label{lem:claim2}
If $T<R/2$, then
  \begin{align}\label{eq:claim2}
    \left|\int_{D\setminus D_{R/2}} r_{\H(x)}(t,x,x)dx-|\partial D|\int_0^{R/2} r(t,q)dq\right|\le \frac{c|D|T^2}{R^2T^d}.
  \end{align}
\end{lemma}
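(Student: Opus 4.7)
The plan is to reduce the $d$-dimensional integral on the left to a one-dimensional integral in the distance variable via the coarea formula, and then to compare the level-set surface measures $|\partial D_q|$ to $|\partial D|$ using the $R$-smoothness of $D$. First, because the process is isotropic and translation invariant, the halfspace heat remainder depends on $x$ only through $\delta_D(x)$; in particular $r_{\H(x)}(t,x,x)=r(t,\delta_D(x))$, with $r(t,q)$ as in \autoref{lem:claim2}. The coarea formula then gives
\[
\int_{D\setminus D_{R/2}} r_{\H(x)}(t,x,x)\,dx=\int_0^{R/2} r(t,q)\,|\partial D_q|\,dq,
\]
so the quantity to estimate becomes
\[
\left|\int_0^{R/2} r(t,q)\,\bigl(|\partial D_q|-|\partial D|\bigr)\,dq\right|.
\]

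Next, I would invoke the geometric consequence of $R$-smoothness: since the principal curvatures of $\partial D$ are bounded by $1/R$, the inward parallel surface $\partial D_q$ is a $C^{1,1}$ perturbation of $\partial D$ with Jacobian between $(1-q/R)^{d-1}$ and $(1+q/R)^{d-1}$ for $q<R/2$. This yields the estimate
\[
\bigl||\partial D_q|-|\partial D|\bigr|\le c\,\frac{q}{R}\,|\partial D|,\qquad 0<q<R/2,
\]
which is the analog of \cite[Corollary 2.14(ii)]{MR2438694} in our setting. Combined with $|\partial D|\le c|D|/R$ (also from $R$-smoothness), it suffices to show that
\[
\int_0^{R/2} q\, r(t,q)\,dq\le c\,T^{2-d}.
\]

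I would split this integral at $q=T$. On $(0,T)$, \autoref{eq:rDtime} gives $r(t,q)\le cT^{-d}$ and the contribution is at most $cT^{-d}\cdot T^2=cT^{2-d}$. On $(T,R/2)$, combining \autoref{eq:zor} with the scaling \eqref{eq:scaling} yields $r(t,q)\le c\,V^2(T)/(V^2(q)q^d)\le c\,T^{\ua}/q^{d+\ua}$, so
\[
\int_T^{R/2} q\, r(t,q)\,dq\le c\,T^{\ua}\!\int_T^{\infty}\! q^{1-d-\ua}\,dq\le c\,T^{2-d},
\]
where the integral converges because $d+\ua>2$ (we have $d\ge 2$ and $\ua>0$). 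Assembling the bounds gives the right-hand side $c|\partial D|T^{2-d}/R\le c|D|T^{2-d}/R^{2}=c|D|T^{2}/(R^{2}T^{d})$, exactly as claimed in \eqref{eq:claim2}.

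The main obstacle is the geometric surface estimate $||\partial D_q|-|\partial D||\le c(q/R)|\partial D|$; everything else is a routine splitting of the radial integral using the WUSC scaling. This estimate is not actually a statement about the L\'evy process at all, but rather a property of $R$-smooth domains (it follows from the tubular neighborhood theorem applied to $\partial D$ together with the uniform bound $1/R$ on the principal curvatures), and once granted, the rest of the argument reduces to checking that the one-dimensional integrals are summable with the correct power of $T$ and $R$.
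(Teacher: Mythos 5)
Your proof is correct and follows essentially the same path as the paper's: reduce via the coarea formula, compare $|\partial D_q|$ to $|\partial D|$ using the $R$-smoothness of $D$ (the paper cites \cite[Corollary 2.14(iii)]{MR2438694} directly for $\bigl||\partial D_q|-|\partial D|\bigr|\le C|D|q/R^2$, whereas you split it into the two estimates $\bigl||\partial D_q|-|\partial D|\bigr|\le cq|\partial D|/R$ and $|\partial D|\le c|D|/R$, which combine to the same thing), then split the radial integral at $q=T$ and estimate each piece exactly as the paper does.

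One small notational remark: on $(T,R/2)$ you invoke \eqref{eq:scaling}, which is the WLSC-derived scaling $V(\varepsilon r)\le C\varepsilon^{\la/2}V(r)$, so the exponent that appears in $V^2(T)/V^2(q)\le c(T/q)^{\,\cdot}$ should be the \emph{lower} scaling index $\la$, not $\ua$. Since all that is needed is that the exponent is strictly positive so that $\int_T^\infty q^{1-d-\la}\,dq<\infty$ for $d\ge 2$, this slip does not affect the conclusion, but you should write $\la$ (the paper's $\ualpha$) in that step.
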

\begin{proof}
Using the coarea formula we get
  \begin{align*}
    \int_{D\setminus D_{R/2}} r_\H(x)(t,x,x)dx = \int_{0}^{R/2} |\partial D_q| r(t,q)dq.
  \end{align*}
  Hence the left side of the inequality \eqref{eq:claim2} is bounded by
  \begin{align*}
    \int_0^{R/2} &\big||\partial D_q|-|\partial D|\big| r(t,q)dq
    \le
    \frac{C|D|}{R^2}\int_0^{R/2} q\ r(t,q)dq,
  \end{align*}
as follows from \cite[Corollary 2.14(iii)]{MR2438694}. 
For $q\in (0,T]$ we have $r(t,q)\le p_t(0)$, hence
  \begin{align*}
    \int_0^T q r(t,q)dq \le c \int_0^{T} \frac{q}{T^{d}} dq = c T^{2-d}.
  \end{align*}
For the remaining integration, using \autoref{eq:zor}
and \eqref{eq:scaling}, we get
  \begin{align*}
    \int_T^{1/\theta} q r(t,q)dq &\le c \int_T^{1/\theta} \frac{t}{q^{d-1}V^2(q)} dq\le c \int_T^{1/\theta}
\frac{V^2(T)}{q^{d-1}V^2(q)} dq
    \\&\le c\int_{T}^{1/\theta} \left(\frac{T}{q}\right)^{\ualpha}\frac{dq}{q^{d-1}}\le c T^{2-d} \int_1^\infty q^{-d+1-\ualpha}dq.
  \end{align*}
  The last integral converges since $d\ge 2$ and 
$\ualpha>0$.
\end{proof}
Thus, for $T<R/2$ we have by \autoref{lem:rDbound}
\begin{align*}
  |\partial D|\int_{R/2}^\infty r(t,q)dq &\le \frac{c|D|}{R} \int_{R/2}^\infty \frac{V^2(T)}{q^dV^2(q)}dq 
  \le \frac{c|D|}{R} \int_{R/2}^{\infty} \frac{dq}{T^{d-2}q^2}=\frac{CT^2}{R^2T^d},
\end{align*}
which is 
a lower order term.
By \autoref{lem:claim1}, \autoref{lem:claim2} and \eqref{eq:inside} we obtain \eqref{eq:rDmCH}.

\section{Proof of \autoref{propjump}.}\label{proofprop}
Let $x^*=0$, $a=(-R,0,\dots,0)$, $b=(R,0,\dots,0)$, $I=B(a,R)$ and $O=B(b,R)$. 
This also means that $x=(x_0,0,\dots,0)$ with $0\le x_0<R/2$, and $\delta_I(x)=|x|$, see \autoref{fig:balls}. Recall that $t<V^2(R/2)$ or equivalently $T<R/2$.
 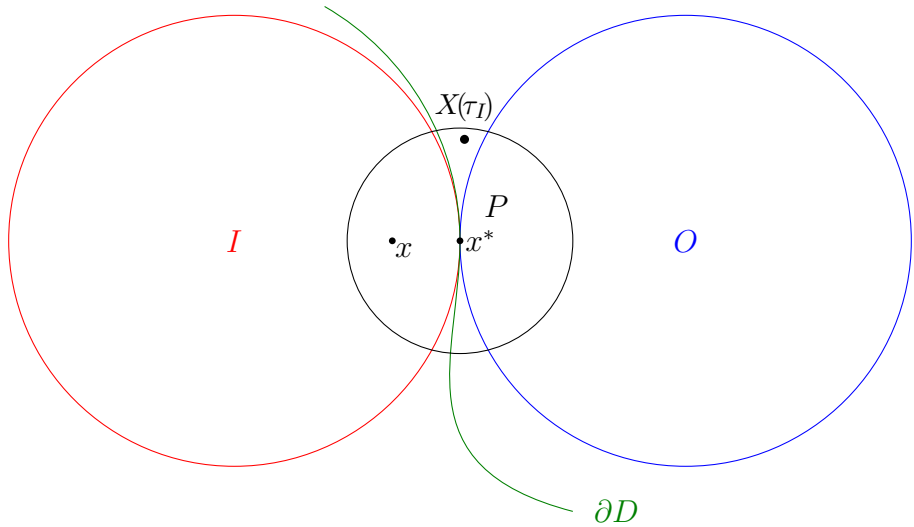
\begin{figure}[t]
   \begin{center}
     \begin{tikzpicture}[rotate=-90,scale=3]
       \draw[red] (0,-1) circle (1) node {$I$};
       \draw[blue] (0,1) circle (1) node {$O$};
       \draw[green!50!black] (0,0) .. controls (0.5,0) and (1,-0.27) .. (1.2,0.5) node [right] { \ $\partial D$};
       \draw[green!50!black] (0,0) arc (90:150:1.2);
       \draw (0,0) circle (0.5) node [above right=5pt] {$P$};
       \fill (0,-0.3) circle (0.015) node [below right=-3pt] {$x$};
       \fill (0,0) circle (0.015) node [right=-2pt] {$x^*$};
       \fill (-0.45,0.020) circle (0.02) node  [above=2.8pt]{\small $X\!(\!\tau_I\!)$};
     \end{tikzpicture}
   \end{center}
   \caption{Balls $I\subset D$ (left), $O\subset D^c$ (right) and $P$ (middle), and ``a~short jump'' to point $X(\tau_I)$. Here $x\in P$ and $|x|=\delta_I(x)$.}
   \label{fig:balls}
 \end{figure}
Before we proceed to the heart of the matter
we need the following lemma based on spherical integration developed in \cite[Pages 355--355]{MR1490808} and later used in \cite{MR2438694,MR2568694}.

\begin{lemma}\label{lem:spherical}
For $s<R$ we have
  \begin{align}\label{eq:spherical}
    \int_{(O^c\setminus I)\cap B(0,s)} \frac{dz}{|x-z|^\beta}\frac{V(\delta_{O^c}(z))}{V(\delta_I(z))}\le c
    \begin{cases}
      |x|^{d+1-\beta}/R&\text{ if }\ \beta>d+1,\\
      s^{d+1-\beta}/R&\text{ if }\ \beta<d+1.
    \end{cases}
  \end{align}
\end{lemma}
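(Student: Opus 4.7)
The plan is to exploit the osculating-sphere geometry at the tangent point $x^*=0$ by slicing $O^c\setminus I$ perpendicular to the common normal. Write $z=(z_1,z')\in\R\times\R^{d-1}$ and $\rho=|z'|$. Since $|z-a|^2=(R+z_1)^2+\rho^2$ and $|z-b|^2=(R-z_1)^2+\rho^2$, the constraint $z\in O^c\setminus I$ reads $|z_1|(2R-|z_1|)\le\rho^2$, and in particular $|z_1|\le\rho^2/R$ whenever $|z|\le s<R$. The exact formulas
\[
\delta_I(z)=\sqrt{(R+z_1)^2+\rho^2}-R,\qquad \delta_{O^c}(z)=\sqrt{(R-z_1)^2+\rho^2}-R
\]
yield $\delta_I(z)+\delta_{O^c}(z)\asymp\rho^2/R$ on the admissible slab, with both terms bounded by $C\rho^2/R$.

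A cheap geometric lower bound for $|x-z|$ comes from two observations. First, since $x\in I$ (so that $\delta_I(x)=|x|=x_0$) and $z\in I^c$, the segment $[x,z]$ must cross $\partial I$, so $|x-z|\ge\delta_I(x)=x_0$. Second, $|x-z|\ge|z'|=\rho$. Hence $|x-z|\ge\max(x_0,\rho)\ge(x_0+\rho)/2$.

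The heart of the argument is the $z_1$-integration for fixed $\rho$ and direction $\tilde\omega\in\Sp^{d-2}$ (writing $z'=\rho\tilde\omega$). The map $z_1\mapsto p:=\delta_I(z)$ is a smooth bijection of the admissible range onto $[0,C\rho^2/R]$, with bounded Jacobian $dp/dz_1=(R+z_1)/(R+p)$, while the complementary distance $q:=\delta_{O^c}(z)$ satisfies $q\asymp\rho^2/R-p$. Consequently
\[
\int_{|z_1|\le\rho^2/R}\frac{V(\delta_{O^c}(z))}{V(\delta_I(z))}\,dz_1\ \le\ CV(\rho^2/R)\int_0^{C\rho^2/R}\frac{dp}{V(p)}.
\]
Here \eqref{eq:lowscaling} applies because $\psi\in\WUSC{\ua}{\theta}{\uC}$ and $\rho^2/R<1/\theta$, giving $V(p)\ge c(Rp/\rho^2)^{\ua/2}V(\rho^2/R)$ for $p\le\rho^2/R$; the integral $\int_0^{\rho^2/R}p^{-\ua/2}\,dp$ converges precisely because $\ua<2$, producing the bound $C\rho^2/R$ for the full slice integral.

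Combining in cylindrical coordinates $dz=\rho^{d-2}\,dz_1\,d\rho\,d\tilde\sigma(\tilde\omega)$ and pulling out the $\tilde\omega$-independent factor $|x-z|^{-\beta}\le C(x_0+\rho)^{-\beta}$ yields
\[
\int_{(O^c\setminus I)\cap B(0,s)}\frac{V(\delta_{O^c})}{|x-z|^\beta V(\delta_I)}\,dz\ \le\ \frac{c}{R}\int_0^s\rho^d(x_0+\rho)^{-\beta}\,d\rho.
\]
The remaining one-dimensional integral is elementary: for $\beta<d+1$ bound $(x_0+\rho)^{-\beta}\le\rho^{-\beta}$ to obtain $\le s^{d+1-\beta}/(d+1-\beta)$; for $\beta>d+1$ substitute $\rho=x_0 t$ so that $\int_0^\infty t^d(1+t)^{-\beta}\,dt<\infty$ yields $\le Cx_0^{d+1-\beta}=C|x|^{d+1-\beta}$. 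I expect the principal obstacle to be the slice-integral estimate above: $V(\delta_I(z))$ degenerates as $z\to\partial I$, and the integrability hinges in an essential way on the upper scaling index $\ua<2$.
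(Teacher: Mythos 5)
Your proof is correct, and it takes a genuinely different route from the paper's. The paper first reduces to the power case $V(x)=x^{\alpha/2}$, splits the domain according to $|x-z|\le 2|x|$ versus $|x-z|>2|x|$, and on each piece cites the spherical-shell estimates \cite[(3.21), (3.23), (3.24)]{MR2438694} for the lune $(O^c\setminus I)$; only at the end does it pass to general $V$ via the scaling \eqref{eq:lowscaling}. You instead work directly with general $V$, use cylindrical coordinates $z=(z_1,z')$ adapted to the common normal at $x^*$, and integrate in $z_1$ at fixed $\rho=|z'|$ by substituting $p=\delta_I(z)$; the scaling \eqref{eq:lowscaling} gives $\int_0^h dp/V(p)\lesssim h/V(h)$ (which is exactly where $\ua<2$ is used), so each slab contributes $\lesssim\rho^2/R$ and the remaining one-dimensional $\rho$-integral is elementary. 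Both arguments hinge on the same geometric fact---the lune's normal thickness at distance $\rho$ from the tangent point is $\asymp\rho^2/R$---but your version is self-contained, avoids the detour through power functions, and makes the role of $\ua<2$ transparent. One small caveat: your estimates $\delta_I+\delta_{O^c}\asymp\rho^2/R$ and $dp/dz_1=(R+z_1)/(R+p)\asymp 1$ degrade as $\rho\to R$ (since $R+z_1$ can then approach $0$ on the lune), so the implied constant depends on $s/R$ being bounded away from $1$. This is harmless for the paper's applications, where the lemma is effectively invoked with $s\le R/2$, but to state the lemma with a constant independent of $s$ for all $s<R$ you should exploit the ball constraint $|z_1|\le\sqrt{s^2-\rho^2}$ rather than only the cylinder constraint $\rho\le s$, or simply restrict the statement to $s\le R/2$.
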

\begin{proof}
  First we consider $V(x)=x^{\alpha/2}$ with $\alpha\in[0,2)$. 
  Let $z\in A=(O^c\setminus I)\cap B(0,s)$. Note that $|x-z|\ge |x|$. 
If $|x-z|\le 2|x|$, then $|z|\le |x-z|+|x|\le 3|x|$, which leads to the integral
 \begin{align*}
   \int_{A \cap \{|x-z|\le 2|x|\}} &\frac{dz}{|x-z|^\beta}\frac{\delta_{O^c}^{\alpha}(z)}{\delta_I^{\alpha}(z)}
   \le
   \frac{1}{|x\wedge s|^\beta}\int_{A\cap \{|z|\le 3(|x|\wedge s)\}} \frac{\delta_{O^c}^{\alpha}(z)}{\delta_I^{\alpha}(z)}dz.
 \end{align*}
 The last integral is similar to \cite[(3.21)]{MR2438694}. Using \cite[(3.23) and (3.24)]{MR2438694} we get the following upper bound
 \begin{align*}
   \frac{c}{|x\wedge s|^\beta}\int_0^{3(|x|\wedge s)} \frac{r^d}{R} dr=\frac{c(|x|\wedge s)^{d+1-\beta}}{R}.
 \end{align*}
If $|x-z|\ge 2|x|$, then $|x-z|\ge |z|/2$ and $|z|\ge |z-x|-|x|\ge |x|$. By \cite[(3.24)]{MR2438694},
 \begin{align*}
   \int_{A \cap \{|x-z|> 2|x|\}} &\frac{dz}{|x-z|^\beta}\frac{\delta_{O^c}^{\alpha}(z)}{\delta_I^{\alpha}(z)}
   \le
   c\int_{A\cap \{s\ge |z|\ge |x|\}} \frac{1}{|z|^\beta}\frac{\delta_{O^c}^{\alpha}(z)}{\delta_I^{\alpha}(z)}dz
   \le 
   \frac{c}R\int_{|x|\wedge s}^{s} r^{d-\beta} dr.
 \end{align*}
 If $\beta>d+1$, then the last integral is bounded by $c|x|^{d+1-\beta}$, while for $\beta<d+1$ we get the upper bound $cs^{d+1-\beta}$.

This settles \eqref{eq:spherical} for $V(x)=x^{\alpha/2}$ with $\alpha\in[0,2)$. Note that 
the form of the right hand side of \eqref{eq:spherical} does not depend on $\alpha$.

Consider general $\psi\in$WUSC$(\ua)$ and the corresponding ladder-height function $V$. Due to the scaling property \eqref{eq:lowscaling} we have
 \begin{align*}
   \frac{V(\delta_{O^c}(z))}{V(\delta_I(z))}\le 
   c\frac{\delta_{O^c}^{\oalpha}(z)}{\delta_I^{\oalpha}(z)},\quad \text{ if }\delta_{O_c}(z)\ge \delta_I(z).
 \end{align*}
If $\delta_{O^c}(z)\le \delta_{I}(z)$, then the fraction is bounded by $1$, since $V$ is monotone.
Therefore, we can use the previous special case with $\alpha=\oalpha$ and $\alpha=0$ to finish the proof.
\end{proof}

We return to the core proof of \autoref{propjump}.
In view of \autoref{propIW} we want to estimate
  \begin{align*}
    \E^x &\left[\tau_I<t,X(\tau_I)\in O^c; p_{O^c}(t-\tau_I,X(\tau_I),x)\right]\nonumber
    \\&\qquad\qquad=
    \int_I\int_0^t p_I(s,x,y)\int_{O^c\setminus I} \nu(y-z) p_{O^c}(t-s,x,z)\,dzdsdy
    \\&\qquad\qquad=
    I_1+I_2+I_3,
  \end{align*}
which  splits the integration into
three subregions, as specified and estimated below:
 \begin{align*}
   I_1:&\qquad |z|>R/2,\\ 
   I_2:&\qquad t/2<s<t\text{ AND }|x-z|<T\text{ AND }|z|\le R/2,\\
   I_3:&\qquad (s<t/2\text{ OR }|x-z|>T)\text{ AND }|z|\le R/2.
 \end{align*}
The setting, especially that of $I_2$, is illustrated on \autoref{fig:balls}.
\subsection{Long jump: integral $I_1$}

On $I_1$ we have $|z|>R/2$, hence $|x-z|\ge R/3$, thus by \autoref{B}
\begin{align*}
  I_1&= \int_I \int_0^t p_I(s,x,y) \int_{|z|>R/2} \nu(y-z) p(t-s,z,x)dsdzdy 
  \\&\le
  \frac{ct}{R^d V^2(R/3)}\int_I \int_0^t p_I(s,x,y) \int_{P^c} \nu(y-z) dsdzdy
  \\&=
  \frac{ct}{R^d V^2(R/3)} \P^x(\tau_I<t,|X(\tau_I)|>R/2)\le \frac{cV^2(T)}{R^d V^2(R/2)},
\end{align*} 
where the last inequality follows from sublinearity \eqref{eq:sublinear} of $V$.
Since $T<R/2$, we have
\begin{align*}
  \frac{cV^2(T)}{R^d V^2(R/2)}\le \frac{c}{R^d}\le \frac{c}{RT^{d-1}}.
\end{align*}
Since $|x|<R/2$, by monotonicity of $V$ we get
\begin{align*}
  \frac{cV^2(T)}{R^d V^2(R/2)}\le \frac{cV(T)}{R^dV(R/2)}\le \frac{c V(T)}{R|x|^{d-1}V(|x|)}.
\end{align*}

\subsection{Long exit time and short jump: integral $I_2$.}

Here we have $|x|\le |x-z|<T$, and $|z|\le |x-z|+|x|< 2T$.
By \autoref{pDbound}, $t/2<q<T$ and \eqref{eq:Tscaling},
\begin{align*}
  p_I(q,x,y)\le T^{-d} \frac{V(\delta_I(y))}{V(T)}.
\end{align*}
Let $S=(O^c\setminus I)\cap \{|z|<2T\}$. We get the following upper bound,
\begin{align*}
  I_2
  &=\int_I \int_{t/2}^t p_I(q,x,y) \int_S \nu(y-z) p_{O^c}(t-q,z,x)dqdzdy 
  \\&\le
  c\int_I T(t)^{-d} \frac{V(\delta_I(y))}{V(T)} \int_S \frac{1}{|y-z|^d V^2(|y-z|)}G_{O^c}^{V^2(R/2)}(x,z)dz dy
  \\&\le
  \frac{cT^{-d}}{V(T)} 
  \int_S \int_I \frac{V(\delta_I(z))}{|y-z|^d V(|y-z|)}\frac{G_{O^c}^{V^2(R/2)}(x,z)}{V(\delta_I(z))}dy dz,
\end{align*}
where we use $\delta_I(y)\le |y-z|$. 
Scaling \eqref{eq:scaling} gives
\begin{align*}
  I_2\le 
  \frac{cT^{-d}}{V(T)}\int_S \int_{B^c(z,\delta_I(z))} \frac{\delta_I^{\ualpha/2}(z)}{|y-z|^{d+\ualpha/2}}\frac{G_{O^c}^{V^2(R/2)}(x,z)}{V(\delta_I(z))}dy dz.
\end{align*}
We then rewrite the inner integral in spherical coordinates, use Green function estimate \eqref{eq:green2} and $|x|<T$,
\begin{align}
  I_2&\le 
  \frac{cT^{-d}}{V(T)} \int_{\delta_I(z)}^{\infty} \frac{\delta_I^{\ualpha/2}(z)dr}{r^{1+\ualpha/2}}
  \int_S\frac{V(|x|)V(\delta_{O^c}(z))}{|x-z|^d V(\delta_I(z))}dz\nonumber
  \\&\le
  cT^{-d} \int_1^{\infty} \frac{dr}{r^{1+\ualpha/2}}
  \int_S\frac{V(\delta_{O^c}(z))}{|x-z|^d V(\delta_I(z))}dz
  =
  cT^{-d} \int_S\frac{V(\delta_{O^c}(z))}{|x-z|^d V(\delta_I(z))}dz.\label{eq:I2bound}
\end{align}
Using \autoref{lem:spherical} with $\beta=d$ and $s=2T$ we get
\begin{align*}
  I_2\le \frac{cT^{1-d}}{R}.
\end{align*}
Since $|x|<T$, we get the desired estimate from \autoref{propjump}.

\subsection{Short exit time or medium jump: integral $I_3$.}
Let $S=(O^c\setminus I)\cap \{|z|<R/2\}$. We have $|x-z|>T$ or $s<t/2$. In either case, \autoref{pDbound} and sublinearity of $V$ implies
\begin{align*}
  p_{O^c}(t-s,x,z)\le \left( T^{-d}\wedge \frac{V^2(T)}{|x-z|^d V^2(|x-z|)} \right)\frac{V(\delta_{O_c}(z))}{V(T)}.
\end{align*}
Therefore by \autoref{lem:ojP},
\begin{align*}
  I_3&
\le
\int_I \int_0^{V^2(R/2)} \!\!\!\!\!\!p_I(s,x,y)\int_S \nu(y-z) \left( T^{-d}\wedge \frac{V^2(T)}{|x-z|^d V^2(|x-z|)} \right)\frac{V(\delta_{O_c}(z))}{V(T)} dzdsdy
  \\&=
  c\int_S K_I^{V^2(R)}(x,z)  \left( T^{-d}\wedge \frac{V^2(T)}{|x-z|^d V^2(|x-z|)} \right)\frac{V(\delta_{O_c}(z))}{V(T)}dz
  \\&\le
  c\int_S \frac{V(|x|)}{V(\delta_{I}(z))}\frac{1}{|x-z|^d}    \left( T^{-d}\wedge \frac{V^2(T)}{|x-z|^d V^2(|x-z|)} \right)\frac{V(\delta_{O_c}(z))}{V(T)}dz.
\end{align*}
If $|x-z|<T$, then we are satisfied with  $T^{-d}$ from the minimum and we note $V(|x|)<V(T)$.
We arrive at \eqref{eq:I2bound}, and finish the proof in the same way as in the previous cases.

We are left with the case $|x-z|>T$, and we have
\begin{align*}
  I_3&\le 
  c V(T)\int_S \frac{V(|x|)}{|x-z|^{2d} V^2(|x-z|)}\frac{V(\delta_{O_c}(z))}{V(\delta_{I}(z))}dz.
\end{align*}
Since $\psi\in$WLSC$(\la)$, we get
\begin{align*}
  I_3&\le
  cV(T)\int_S \frac{|x|^{\ualpha/2}}{|x-z|^{2d+\ualpha/2}V(|x-z|)}\frac{V(\delta_{O_c}(z))}{V(\delta_{I}(z))}dz
  \\&\le
  \frac{cV(T)|x|^{\ualpha/2}}{(T\vee |x|)^{d-1}V(T\vee |x|)}\int_S \frac{V(\delta_{O_c}(z))}{|x-z|^{d+1+\ualpha/2}V(\delta_{I}(z))}dz,
\end{align*}
where the last inequality follows from the monotonicity of $V$, since $|x-z|\ge |x|\vee T$.
Now we use \autoref{lem:spherical} with $\beta=d+1+\ualpha/2$, to get
\begin{align*}
  I_3\le \frac{cV(T)}{(T\vee |x|)^{d-1}V(T\vee |x|)R}.
\end{align*}
Here the right hand side  is comparable with the required upper bound.


\end{document}